\newtheorem{remark}{remark}[section] 
\newcommand{\argmin}{\operatornamewithlimits{argmin}}
\newcommand{\bydef} {{\buildrel{\triangle}\over =}}
\definecolor{red}{rgb}{1,0,0}
\definecolor{Purple}{rgb}{1,0,0}
\title{Mean Field Game Theory for Agents with Individual-State Partial Observations\thanks{Some of the work in this paper was presented at the $55^{th}$ IEEE Conference on Decision and Control, Las Vegas, NV, USA, December, 2016.}}
\author{Nevroz \c{S}en \thanks{ABB Inc. San Jose CA. This author's work was performed at the Center for Intelligent Machines (CIM) and the Department of Electrical and Computer Engineering, McGill University, Montreal QC, Canada (email: {\tt\small nsen@cim.mcgill.ca}). } \and  Peter E. Caines\thanks{Department of Electrical and Computer Engineering and CIM, McGill University, Montreal QC, Canada (email: {\tt\small peterc@cim.mcgill.ca}). Work supported by research grants to this author from Natural Science and Engineering Research Council of Canada (NSERC) and  Air Force Office of Scientific Research (AFOSR).}} %
\begin{document}
\maketitle
\thispagestyle{empty}
\pagestyle{empty}

\begin{abstract}
\noindent Subject to reasonable conditions, in large population stochastic dynamics games, where the agents are coupled by the system's mean field (i.e. the state distribution of the generic agent) through their nonlinear dynamics and their nonlinear cost functions, it can be shown that a best response control action for each agent exists which (i) depends only upon the individual agent's state observations and the mean field, and (ii) achieves a $\epsilon$-Nash equilibrium for the system. In this work we formulate a class of problems where each agent has only partial observations on its individual state. We employ nonlinear filtering theory and the Separation Principle in order to analyze the game in the asymptotically infinite population limit. The main result is that the $\epsilon$-Nash equilibrium property holds where the best response control action of each agent depends upon the conditional density of its own state generated by a nonlinear filter, together with the system's mean field. Finally, comparing this MFG problem with state estimation to that found in the literature with a major agent whose partially observed state process is independent of the control action of any individual agent, it is seen that, in contrast, the partially observed state process of any agent in this work depends upon that agent's control action.
\end{abstract}
\begin{keywords}
mean field games, partially observed stochastic control, nonlinear filtering, stochastic games.
\end{keywords}

\begin{AMS}35Q83, 35R60, 60G35, 91A10, 91A23, 93A14,  93E20 \end{AMS}

\pagestyle{myheadings}
\thispagestyle{plain}
\markboth{NEVROZ \c{S}EN AND PETER E. CAINES}{MEAN FIELD GAMES WITH PARTIAL OBSERVATIONS}

\section{Introduction}\label{sec:intro}
For dynamical games of mean field type it has been demonstrated that when the agents are coupled through their dynamics and their cost functions, the best response control policies in the asymptotically infinite population limit only depends upon their individual state and the system mean field. Furthermore, such policies generate approximate Nash equilibria when they are applied to a large finite population game, see \cite{hmc-cdc03}, \cite{hmc-mfg-main}, \cite{hmc-cdc06} and \cite{hmc-tac} among others, by Huang, Malham\'{e} and Caines, \cite{lasry-lions-1}, \cite{lasry-lions-2} and \cite{lasry-lions-3}, by Lasry and Lions. 

A distinct consequence of such a result is that in the mean field games (MFG) set-up an individual agent does not have a significant benefit in learning the state of an other agent, and therefore the estimation of any other agent's state process has negligible value. Nonetheless, in practical situations one does not have access to complete observation of its own state and and therefore models of such (PO) MFG systems where the agents' controls depend upon the agents' observation processes can only represent them as functions of the agents' states via estimates of those states. Such a model for linear quadratic Gaussian (LQG) MFG type of problems has been considered in \cite{hmc-mtns06} and approximate Nash equilibrium is obtained on an extended state space. In this work, we consider the nonlinear MFG where an individual agent has noisy observation on its own state. 

Recent works, (\cite{minyi-major} and \cite{NP-Siam2013}), consider MFG involving a major agent and many minor agents (MM-MFG) where, by definition, a minor agent is an agent which, asymptotically as the population size goes to infinity, has a negligible influence on the overall system while the overall population's effect on it is significant, and where a major agent is the agent which has asymptotically non-vanishing influence on each minor agent as the population size goes to infinity. A fundamental feature of this setup is that, in contrast to the situation without a major agent, the mean field is stochastic due to the stochastic evolution of the state of the major agent and the best response processes of each minor agent depend on the state of the major agent. Motivated by this observation, state estimation problems in the nonlinear MFG with a major agent is considered in \cite{sen-caines-po-sicon} (see \cite{pec-ak-tac} and \cite{dena-pec-cdc} for the LQG case) where the major agent's state process is partially observed but the agents have complete observation of their own states. Adopting the approach of constructing an equivalent completely observed model via application of nonlinear filtering, the MFG problem is analyzed in the space of conditional densities and the existence of Nash equilibria in the infinite population and the $\epsilon$-Nash equilibria for the finite population game is obtained. We finally remark that in addition to \cite{minyi-major} and \cite{NP-Siam2013}, MFG setup with major and minor agents has also been considered in \cite{Bensoussan-dp} and \cite{carmona} where in the former the authors generalized the MM-MFG setup to the case where the mean field is determined by control policy of the major agent and in the later, a probabilistic approach is taken for MFG problems in which major agent's state exists in the both state and the cost functions. We also refer to \cite{carmona-book} for the analysis of MFG with common noise. 

The individuals dynamics in the infinite population limit in an MFG setup are charactherized by McKean-Vlasov (MV) type stochastic differential equations (SDEs). These SDEs have the property that the dynamics depend on the distribution of the state process. Hence, a PO stochastic optimal control problem (SOCP) is formulated for MV type SDEs and as a consequence, the filtering equations should first be developed for such SDEs for which a theory for joint state and distribution estimation in the case the measure is stochastic is developed in \cite{sen-caines-nlf-sicon}. Following the standard approach in the literature, once the filtering equations in the form of normalized or unnormalized densities are obtained, it is possible to obtain a form of the Hamilton-Jacobi-Bellman (HJB) equation in functional spaces. This is the path that we employ in the paper by using the unnormalized conditional densities. 

It is also worthwhile to provide a summary of the technical steps that one shall develop in a nonlinear PO MFG setup. We first remark that one can follow different approaches in order to prove the convergence properties of MFG in the infinite population. Among these, the convergence of the dynamics of the controlled state process into a MV type dynamics when feedback controls are applied, see \cite{hmc-mfg-main}, greatly simplifies the analysis of the associated optimal control problem. In the partially observed setup, we follow this approach consequently, as the first step, we shall prove such a convergence argument for the case where the control policies are in the feedback form for conditional densities. We next analyze the fixed point property on the Wasserstein space of probability measures. Recall however that the solution to the completely observed MFG problem is given by a coupled HJB and a Fokker-Plank-Kolmogorov (FPK) equation which essentially requires to analyze the sensitivity of the solutions to the HJB equations with respect to the probability measure representing the mean field. In the PO MFG one needs to generalize such sensitivity results with respect to the conditional density component representing the information state. This is achieved by using the robustness property of the nonlinear filter. In the final stage, we shall prove the approximate Nash equilibrium property of the best response control policies obtained as the solution to the HJB equation of the infinite population game. 

The organization of the paper is as follows. In Section \ref{sec:mfg-main} we present a MFG setup with uniform agents and discuss the main results in a brief manner. In Section\ref{sec:po-mfg} we formulate the state estimation problem and present the associated completely observed system via applying the separation principle. We also provide a solution in the form of HJB equation for the completely observed problem. In Section \ref{sec:soln-fo-socp} we prove the existence of a Nash equilibrium between an individual and the mass in the infinite population limit and in Section \ref{sec:e-nash}, we demonstrate the approximate Nash equilibrium property of the best response processes obtained in the former section. In Section \ref{sec:example} we present an example where the completely observed model has a finite dimensional information state and hence provides a more tractable MFG system. In Section \ref{sec:comp-maj} we briefly compare the results presented in the paper with a PO MM-MFG model. We conclude the paper with Section \ref{sec:conc}. 

Throughout the paper we use the following notation. For a matrix $A$, $A^T$, $\mathbf{tr}(A)$ and $A_{ij}$ denotes the transpose, the trace and the corresponding entry, respectively. $\nabla_x$ and $\nabla_{xx}^2$ denotes the gradient and Hessian operators with respect to the variable $x$ and in a one-dimensional domain, $\partial_x$ and $\partial_{xx}^2$ will be used instead. Let $\mathbb{S}$ be a metric space. Then, $\mathcal{B}(\mathbb{S})$ denotes the Borel $\sigma$-algebra and $\mathcal{P}(\mathbb{S})$ denotes the space of probability measures, respectively, on $\mathbb{S}$. Let $(\Omega,\mathcal{F},\{\mathcal{F}_t\}_{t\geq 0},P)$ be a filtered probability space satisfying usual conditions.  Conditional expectation with respect to a sigma algebra $\mathcal{F}$ is denoted by $\mathbb{E}\left(\cdot|\mathcal{F}\right)$. For an Euclidean space $H$, we denote by $L_{\mathcal{G}}^2\left([0,T];H\right)$ the set of all $\{\mathcal{G}\}$-adapted $H$-valued processes such that $\mathbb{E}\int_{0}^T|f(t,\omega)|^2dt<\infty$. 
\section{Mean Field Games with Uniform Agents}\label{sec:mfg-main}
We consider a stochastic dynamic game with $N$ agents, $\{\mathcal{A}_i, 1\leq i \leq N\}$, where the dynamics of the agents are given by the following controlled SDEs on $\bigl(\Omega,\mathcal{F},\{\mathcal{F}_t\}_{t\geq 0},P\bigr)$:
\begin{eqnarray}
dz_{i}^N(t)=\frac{1}{N}\sum_{j=1}^Nf\left(t,z_{i}^N(t),u_{i}^N(t),z_{j}^N(t)\right)dt+\sigma dw_{i}(t), \label{eq:sta-dyn}
\end{eqnarray}
with terminal time $T\in (0,\infty)$ and initial conditions ${z}_{i}^N(0)=z_i(0)$, $1\leq i \leq N$, where (i) $z_{i}^N(t) \in \mathbb{R}$, $u_{i}^N(t) \in U$, $0\leq t\leq T$, are the state and control input of agent $\mathcal{A}_i$, (ii) $f:[0,T]\times\mathbb{R}\times U\times \mathbb{R}\rightarrow \mathbb{R}$ is a measurable function, (iii) $(w_{i}(t))_{t\geq0}$ are independent standard Brownian motions in $\mathbb{R}$ and; (iv) $\sigma >0$ is constant. For $1\leq j \leq N$ we denote by $u_{-j}^N:=\{u_1^N,\ldots,u_{j-1}^N,u_{j+1}^N,\ldots,u_N^N\}$, where agents' states and controls are taken to be scalar valued for simplicity of notation throughout the paper. The objective of each agent is to minimize its cost-coupling function given by
\begin{eqnarray}
J_{i}^N(u_{i}^N,u_{-i}^N):=\mathbb{E} \hspace{-0.01in}\int_{0}^T\frac{1}{N}\sum_{j=1}^NL\left(z_{i}^N(t),u_{i}^N(t),z_{j}^N(t)\right)dt,\label{eq:cost}
\end{eqnarray}
where $L:\mathbb{R}\times U\times \mathbb{R}\rightarrow\mathbb{R}_{+}$. Remark that the above model can be generalized to the case where the diffusion coefficient depends on the mean field coupling, where the state processes take values in, say, $\mathbb{R}^m$ and where the cost functions are time varying. We assume the followings:
\begin{itemize}
\item [(A0)] The initial states $\{z_j(0), 1 \leq j \leq N\}$ are mutually independent, independent of all Brownian motions and satisfy $\sup_{j \in \{1,\ldots,N\}}\mathbb{E}|z_j(0)|^2\leq k < \infty$, where $k$ is independent of $N$. Furthermore, let $F_N(x):=\left(1/N\right)\sum_{i=1}^N \mathbf{1}_{\{\mathbb{E}z_i(0)\leq x\}}$ denote the empirical distribution of agents where $\mathbf{1}_{\{\mathbb{E}z_i(0)< x\}}=1$ if $\mathbb{E}z_i(0)< x$ and $\mathbf{1}_{\{\mathbb{E}z_i(0)< x\}}=0$ otherwise. Then we assume $\{F_N:N\geq 1\}$ converges to a distribution $F$ weakly.
\item[(A1)] $U$ is a compact set.
\item[(A2)] The functions $f(t,x,u,y)$, $L(x,u,y)$ are continuous and bounded in all their parameters and Lipschitz continuous in $(x,y)$. 
\item [(A3)] The first and second order derivatives of $f(t,x,u,y)$ and $L(x,u,y)$ with respect to $x$ are all uniformly continuous and bounded with respect to all their parameters and Lipschitz continuous in $y$. \item [(A4)] $f(t,x,u,y)$ is Lipschitz continuous in $u$.
\end{itemize}   
For the system described by (\ref{eq:sta-dyn})-(\ref{eq:cost}), the goal is to find individual control strategies and characterize their optimality with regard to Nash equilibrium. Following the standard approaches in literature, the asymptotic analysis ($N\rightarrow \infty$) of the above game shall be considered first and as a consequence, MV type dynamics approximating the state dynamics of an individual agent should be obtained. More explicitly, let $\phi\left(t,x\right) \in C_{\rm{Lip}(x)}\left([0,T]\times \mathbb{R};U\right)$, the space of $U$-valued, continuous functions on $[0,T]\times \mathbb{R}$ with Lipschitz coefficient in $x$, which is used by the agents as their control laws. Hence, the closed loop dynamics of agents are given by
\begin{eqnarray}
d\hat{z}_{i}^{o}(t)=\frac{1}{N}\sum_{j=1}^Nf\left(t,\hat{z}_{i}^{o}(t),\phi(t,\hat{z}_i^{o}),\hat{z}_{j}^{o}(t)\right)dt+\sigma dw_{i}(t), \label{eq:sta-dyn-clp}
\end{eqnarray}
for which unique solution is known to exist \cite[Chapter 1, Theorem 6.16]{xyz}. Consider the following MV type dynamics
\begin{eqnarray}
dz_{i}^{o}(t)=f\left[t,z_{i}^{o}(t),\phi\left(t,z_i^{o}(t)\right),\mu_t\right]dt+\sigma dw_{i}(t), \label{eq:sta-dyn-mv}
\end{eqnarray}
with $z_{i}^{o}(0)=z_i(0)$ and $\mu_t \in \mathcal{P}\left(\mathbb{R}\right)$. Here $f\left[t, x,u,\mu_t\right]=\int_{\mathbb{R}}f\left(t,x,u,y\right)\mu_t(dy)$ and we use the same notation in the rest of the paper. A pair $\left(z_i^o(t),\mu_t\right)$ for (\ref{eq:sta-dyn-mv}) is said to be a \textit{consistent solution} if $z_i^o(t)$ is a solution to the SDE in (\ref{eq:sta-dyn-mv}) and $P(z_i^o(t)\leq \alpha) = \int_{-\infty}^\alpha \mu_t(dy)$ for all $\alpha \in \mathbb{R}$ and $0\leq t \leq T$. The closed loop dynamics in (\ref{eq:sta-dyn-clp}) can be $O(1/\sqrt{N})$-approximated by the MV type dynamics given by (\ref{eq:sta-dyn-mv}) \cite{hmc-mfg-main}. We shall now proceed, following the outline summarized in the introduction, with the partially observed MFG in the infinite population and McKean-Vlasov approximation with filtering dependent control policies.
\section{Partially Observed Mean Field Games and Nonlinear Filtering for MV Systems}\label{sec:po-mfg}
In this section we formulate the estimation problem associated to the MFG set-up described above. Let agent $\mathcal{A}_i$ has access to a noisy observation of its own state via:
\begin{eqnarray}
dy_i(t)=h\left(t, z_i^o(t)\right)dt+d\nu_i(t),\label{eq:obs-dyn}
\end{eqnarray}
where $\left(\nu_i(t)\right)_{0\leq t \leq T}$ is a Brownian motion independent of $\big\{z_i(0), \left(w_i(t)\right)_{0\leq t \leq T}, 1\leq i \leq N\big\}$ and of the other noise processes $\big\{(\nu_{-i}(t)_{0\leq t \leq T} \big\}$. We assume the following.
\begin{itemize}
\item [(A5)] The function $h:[0,T]\times\mathbb{R}\rightarrow \mathbb{R}\in C^{1,2}_{t,x}\left([0,T]\times \mathbb{R}\right)$, the space of functions which are differentiable in $t$ and twice differentiable in $x$, with $|\partial_xh(t,x)|+|\partial^2_{xx}h(t,x)|\leq K$ and $|\partial_th(t,x)|\leq K (1+|x|)$ for all $(t,x) \in [0,T]\times\mathbb{R}$.
\end{itemize}
Following the standard approach to the PO SOCP, we shall construct the associated completely observed system via application of nonlinear filtering for the dynamics described in (\ref{eq:sta-dyn-mv}). But prior to that we obtain an MV type approximation result for the state process controlled with filtering dependent policies, since under suitable assumptions the optimal control takes a feedback form given by the solution of an HJB equation with infinite dimensional domain.
\subsection{Nonlinear Filtering for McKean-Vlasov Dynamics}
The nonlinear filtering equations that each agent needs to generate are defined as follows: Given the history of observations $\mathcal{F}_t^{y_i}:=\sigma\{y_i(s):s\leq t\}$, determine a recursive expression for $\mathbb{E}\left[\ell\left(z_i^o\left(t\right)\right)|\mathcal{F}_t^{y_i}\right]$ for $\ell \in C_b^2(\mathbb{R})$, the space of all bounded differentiable functions with bounded derivatives up to order 2. Note that the agent's state $z_i^o(t)$ has MV type dynamics and so, for a fixed measure flow, we have
\begin{eqnarray}
f\left[t,z_i^o,u_i,\mu_t\right]=\int_{\mathbb{R}}f[t,z_i^o,u_i,x]\mu_t(dx):=f^*\left(t,z_i^o,u_i\right),\label{eq:sta-dyn-indc}
\end{eqnarray}
where $f^*:[0,T] \times \mathbb{R} \times U\rightarrow \mathbb{R}$. Hence, consider the SDEs
\begin{eqnarray}
dz_i^o(t)&=&f^*\left(t,z_i^o(t),u_i(t)\right)dt +\sigma dw_{i}(t),\label{eq:mfg-obser-ind-meas}\\
dy_i(t)&=&h\left(t,z_i^o(t)\right)dt+ d\nu_i(t) \label{eq:obs-dyn-sta-meas-b}.
\end{eqnarray}
The filtering problem for the MV system described by (\ref{eq:mfg-obser-ind-meas})-(\ref{eq:obs-dyn-sta-meas-b}) has been analyzed in \cite{sen-caines-nlf-sicon} where filtering equations generating conditional distributions are obtained. We can similarly obtain filtering equations in the form of conditional densities as follows. Define the following innovation process: $I_i(t)=y_{i}(t) - \int_{0}^t \mathbb{E}\left[h(s,z_i^o(s))|\mathcal{F}_s^{y_i}\right]ds$ which can be shown to be a $\mathcal{F}_t^{y_i}$-Brownian motion under the measure $P$. Let $\pi_i:=P\left(z_i^o(t)|\mathcal{F}_t^{y_i}\right)$ and define
\begin{eqnarray}
\mathcal{L}\ell:=\frac{1}{2}\sigma^2\partial_{xx}^2\ell+f^*\partial_x\ell \label{eq:operator-sta}.
\end{eqnarray}
Define the adjoint operator on $ C^2\left(\mathbb{R}\right)$ as:
\begin{eqnarray}
\mathcal{L}^\ast\theta(x)= \frac{1}{2}\partial_{xx}^2\sigma\theta(x)-\partial_xf^\ast\theta(x).\label{eq:adjoint-op}
\end{eqnarray}
Let $\varphi_{i}$ denote the probability density for $\pi_i$ i.e., for $A \in \mathcal{B}(\mathbb{R})$, $\pi_{i}(t,A)=\int_{A}\varphi_{i}(t,x)dx$ where $\varphi_{i}(\cdot)$ is $(t,x)$-measurable and $\mathcal{F}_t^{y_i}$ adapted for each $t \in [0,T]$. Then $\varphi_{i}(t,x)$ satisfies the following: For every $t$,
\begin{eqnarray}
&&\varphi_{i}(t,x)=\varphi_i(0,x)\label{eq:dens-filt-main-norm}\\
&&+\int_{0}^t\mathcal{L}^\ast\varphi_i(s,x)ds+\int_{0}^t\varphi_i(s,x)\biggl\{h(s,x)-\int_{\mathbb{R}}h(s,x')\varphi_i(s,x')dx'\biggr\}dI_i(s),\nonumber
\end{eqnarray}
for a.e. $x$ with probability $1$ and where $\varphi_i(0,x)$ is the initial conditional density and $I_i(t)$ is the Innovations process, which is a Brownian motion, defined above. This can be shown, for instance, by following \cite[Theorem 11.2.1]{Kallianpur}. Based on the consistency based approach to MFG \cite{hmc-mfg-main}, we now provide a decoupling result which demonstrates that the closed loop dynamics of each agent in the infinite population limit is approximated by MV SDEs in the partially observed setup. 
\subsection{McKean-Vlasov Approximation with Partial Information}
Let $\mathsf{E}$ be a vector space with norm $\| \cdot \|_{\mathsf{E}}$ such that the process $\varphi_{i}(t)$, $0\leq t \leq T$, satisfying (\ref{eq:dens-filt-main-norm}) takes values in. Recall also that the process $\varphi_{i}(t)$ is $\mathcal{F}_t^{y_i}$-adapted. Let $\alpha(t,p):[0,T]\times \mathsf{E}\rightarrow U$ be an arbitrary measurable process and assume that
\begin{enumerate}
\item [(M1)] $\alpha(t,p) \in C_{\rm{Lip}(p)}\left([0,T]\times \mathsf{E};U\right)$ and $\alpha(t, 0) \in L^2_{\mathcal{F}_t^{y_i}}([0,T];U)$.
\end{enumerate}
\noindent Assume that the process $\alpha(t,\cdot)$ is used by agent $i$ as its control laws in (\ref{eq:sta-dyn}) such that $u_i=\alpha$ for $1\leq i \leq N$. We then obtain the following closed-loop dynamics: 
\begin{eqnarray}
dz_{i}^N(t)&=&\frac{1}{N}\sum_{j=1}^Nf\left(t,z_{i}^N(t),\alpha\left(t,\varphi_{i}(t)\right),z_{j}^N(t)\right)dt+\sigma dw_{i}(t), ~z_{i}^N(t)=z_i(0). \label{eq:sta-dyn-cl}
\end{eqnarray}
One can show that under the assumptions (A1)-(A4), the systems of equation given in (\ref{eq:sta-dyn-cl}) has a unique solution $\left(z_{1}^N,\ldots,z_{N}^N\right)$ by following similar steps to those in the proof of Theorem 6.16 of \cite[p. 49]{xyz} and by using the robustness (i.e., continuity with respect to the observation path) of nonlinear filter; see Theorem \ref{the:mv-sde-uniq}. We now introduce the MV system for the generic agent where the agent's MV system shall contain the estimation of its own state via nonlinear filtering equations:
\begin{eqnarray}
d\hat{z}(t)&=&f\left[t,\hat{z}(t),\alpha\left(t,\varphi(t)\right),\mu_t
\right]dt+ \sigma dw(t), \label{eq:mv-appr-1}\\
dy(t)&=&h\left(\hat{z}(t)\right)dt+ d\nu(t),~\enspace 0 \leq t \leq T, \label{eq:mv-appr-obs-1}
\end{eqnarray}\noindent with the initial condition $\hat{z}(0)=z(0)$ and $\bigl(w(t),\nu(t)\bigr)_{0\leq t \leq T}$ are standard Brownian motion in $\mathbb{R}$, which are independent of each other and independent of initial condition $z(0)$. Furthermore, we characterize $\mu_t$ by $P\left(\hat{z}(t)\leq \alpha\right)=\int_{-\infty}^\alpha\mu_t(dx)$, $0< t \leq T$. Finally, $\varphi(t)$ is the $\mathcal{F}_t^y$-adapted solution to filtering equation for the conditional density. We remark that under (A0)-(A4), (A5) and (M1) it can be shown that a unique consistent solution to the above MV system exists; see Theorem \ref{the:mv-sde-uniq}. Let us also introduce
\begin{eqnarray}
d\hat{z}_i(t)&=&f\left[t,\hat{z}_i(t),\alpha\left(t,\varphi_i(t)\right),\mu_t\right]dt+ \sigma dw_i(t),  \label{eq:mv-appr-min-2}\\
dy_i(t)&=&h\left(t,\hat{z}_{i}(t)\right)dt+ d\nu_i(t),~0 \leq t \leq T, \label{eq:mv-appr-obs-2}
\end{eqnarray}
where $\left(w_i(t),\nu_i(t)\right)_{0\leq t \leq T} \enspace 1 \leq i \leq N$ Brownian motions in $\mathbb{R}$ which are are independent of each other and independent of $(z_i(0), 1\leq i \leq N)$ and $\mu_t$ is the law of $\hat{z}_i(t)$. These equations can be considered as $N$ independent copies of (\ref{eq:mv-appr-1})-(\ref{eq:mv-appr-obs-1}). We can now state the MV approximation result.
\begin{theorem}\label{the:mv-appr-partial}
Assume (A0)-(A4), (A5) and (M1) hold. Then 
\begin{eqnarray}
\sup_{1\leq j \leq N} \sup_{0\leq t \leq T} \mathbb{E}|z_j^N(t)-\hat{z}_j(t)|=O\left(\frac{1}{\sqrt{N}}\right) \label{eq:mv-conv-partial},
\end{eqnarray}
where $z_j^N(t)$ and $\hat{z}_j(t)$, $1\leq j \leq N$, are given in (\ref{eq:sta-dyn-cl}) and (\ref{eq:mv-appr-min-2}), respectively, and $O(\frac{1}{\sqrt{N}})$ depends on $T$.
\end{theorem}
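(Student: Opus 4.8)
\section*{Proof proposal}

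The plan is to run a pathwise coupling on a common probability space: I realise $z_j^N$ and $\hat z_j$ with the \emph{same} Brownian motion $w_j$, the same observation noise $\nu_j$ (so that the filter $\varphi_j$ is a common $\mathcal{F}_t^{y_j}$-adapted signal feeding the control in both systems), and the common initial datum $z_j^N(0)=\hat z_j(0)=z_j(0)$. Subtracting (\ref{eq:mv-appr-min-2}) from (\ref{eq:sta-dyn-cl}) the diffusion terms cancel, the initial error vanishes, and the control $\alpha(s,\varphi_j)$ enters identically in both drifts, leaving
\begin{align*}
z_j^N(t)-\hat z_j(t)=\int_0^t\!\Big(\tfrac1N\textstyle\sum_{k=1}^N f\big(s,z_j^N,\alpha(s,\varphi_j),z_k^N\big)-f\big[s,\hat z_j,\alpha(s,\varphi_j),\mu_s\big]\Big)\,ds.
\end{align*}
I then set $g_j^N(t):=\mathbb{E}|z_j^N(t)-\hat z_j(t)|$ and $\rho_N(t):=\sup_{1\le j\le N}g_j^N(t)$, so the claim becomes $\sup_{t\le T}\rho_N(t)=O(N^{-1/2})$. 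Well-posedness of both systems on this common space, required to justify the manipulations, is furnished by Theorem \ref{the:mv-sde-uniq}.

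Next I would insert $\tfrac1N\sum_k f(s,\hat z_j,\alpha(s,\varphi_j),\hat z_k)$ to split the integrand. The first piece $\tfrac1N\sum_k[f(s,z_j^N,\alpha,z_k^N)-f(s,\hat z_j,\alpha,\hat z_k)]$ is bounded, by the Lipschitz continuity of $f$ in $(x,y)$ from (A2), by $C(|z_j^N-\hat z_j|+\tfrac1N\sum_k|z_k^N-\hat z_k|)$, whose expectation is at most $2C\rho_N(s)$. The second piece is the law-of-large-numbers term $m_j^N(s):=\tfrac1N\sum_k f(s,\hat z_j,\alpha(s,\varphi_j),\hat z_k)-\int_{\mathbb R} f(s,\hat z_j,\alpha(s,\varphi_j),y)\mu_s(dy)$, which produces the rate. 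Conditioning on $\mathcal{G}_j:=\sigma(\hat z_j(s),\varphi_j(s))$, the copies $(\hat z_k)_{k\neq j}$ are independent of $(\hat z_j,w_j,\nu_j)$ and hence of $\varphi_j$, so the summands are conditionally independent; boundedness of $f$ (A2) gives a conditional variance $O(1/N)$, the self-interaction term $k=j$ contributes $O(1/N)$, and the discrepancy between $\tfrac1N\sum_k\mathrm{Law}(\hat z_k(s))$ and the consistent flow $\mu_s$ is controlled through (A0) and the Lipschitz-in-$y$ bound of (A2) (being $O(N^{-1/2})$ under i.i.d.\ initial data). Thus $\mathbb{E}|m_j^N(s)|\le(\mathbb{E}|m_j^N(s)|^2)^{1/2}=O(N^{-1/2})$ uniformly in $j,s$, the direct analogue of the complete-observation estimate in \cite{hmc-mfg-main}. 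Assembling the two bounds and taking $\sup_j$ yields $\rho_N(t)\le C\!\int_0^t\rho_N(s)\,ds+C\,T\,N^{-1/2}$, and Gronwall's inequality gives $\rho_N(t)\le C\,T\,e^{CT}N^{-1/2}$, which is the asserted $O(N^{-1/2})$ with constant depending on $T$.

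I expect the genuinely partial-observation-specific difficulty, hidden by the cancellation above, to be the main obstacle. It surfaces if each finite-population agent filters its \emph{own} realised state $z_j^N$ rather than the reference $\hat z_j$: the two controls no longer coincide and a mismatch term $|\alpha(s,\varphi_j^N)-\alpha(s,\varphi_j)|$ appears. By (M1) this is at most $L_\alpha\|\varphi_j^N(s)-\varphi_j(s)\|_{\mathsf{E}}$, and the delicate step is to bound the filter difference by the observation-path difference using the \emph{robustness} (pathwise continuity in the observation $y$) of the conditional-density equation (\ref{eq:dens-filt-main-norm}), and then to bound $\sup_{r\le s}|y_j^N(r)-y_j(r)|\le K\int_0^s|z_j^N-\hat z_j|\,dr$ through (A5). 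Provided robustness supplies a \emph{linear} (Lipschitz) modulus in $\|\cdot\|_{\mathsf{E}}$, this contribution is again $\le C\int_0^t\rho_N(s)\,ds$ and is absorbed into the same Gronwall loop without degrading the rate; obtaining such a quantitative robustness estimate for the infinite-dimensional filter state, uniformly over the class (M1) of admissible control laws, is the crux of the argument.
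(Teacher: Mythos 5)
Your proposal is correct and follows essentially the same route as the paper's proof: the same coupling in which the control $\alpha(s,\varphi_j)$ is the identical process in both the finite-population and MV systems, the same Lipschitz-plus-law-of-large-numbers decomposition (your merged Lipschitz piece is the paper's $D_i^1+D_i^2$, your $m_j^N$ is the paper's $D_i^3$), the same no-cross-terms/conditional-independence bound giving $\mathbb{E}|D_i^3|^2=O(1/N)$, and the same Gronwall closure. The partial-observation difficulty you flag in your final paragraph simply does not arise in the paper's proof of this theorem, because (\ref{eq:sta-dyn-cl}) is defined with the control fed by the same filter process $\varphi_i$ of (\ref{eq:dens-filt-main-norm}) as in (\ref{eq:mv-appr-min-2}); the quantitative Lipschitz-type robustness estimate you anticipate (via the robust representation of the filter, \cite[Theorem 5.12]{bain-crisan}) is precisely what the paper invokes in the places where the filter mismatch genuinely occurs, namely in the proof of Theorem \ref{the:mv-sde-uniq} (see (\ref{eq:prf-mv-sde-8})) and in the $\epsilon$-Nash argument (see (\ref{eq:prf-nash-prf-1})).
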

\begin{proof}
The proof is an extension of \cite[Theorem 12]{hmc-mfg-main} to the case where control laws depend on the filtering processes. Consider first the $i$th agent and notice that
\begin{eqnarray}
&&z^N_i(t)-\hat{z}_i(t)=\label{eq:mvprf1}\\
&&\int_{0}^t\frac{1}{N}\sum_{j=1}^Nf\left(t,z_{i}^N, \alpha\left(s,\varphi_{i}(s)\right),z_j^N\right)ds-\int_{0}^tf\left[s,\hat{z}_i(s),\alpha\left(s,\varphi_i(s)\right),\mu_s\right]ds\nonumber. 
\end{eqnarray}
Let 
\begin{eqnarray}
&&D_{i}(s):=\label{eq:mvprf2}\\
&&\frac{1}{N}\sum_{j=1}^Nf\left(s,z_{i}^N(s), \alpha\left(s,\varphi_{i}(s)\right),z_{j}^N(s)\right)
-\int_{\mathbb{R}}f\left(s,\hat{z}_i(s),\alpha\left(s,\varphi_i(s)\right),y\right)\mu_s(dy), \nonumber
\end{eqnarray}
and observe that
\begin{eqnarray}
D_{i}(s)&=&D_{i}^1(s)+D_{i}^2(s)+D_{i}^3(s),\nonumber\\
D_{i}^1(s)&:=&\frac{1}{N}\sum_{j=1}^Nf\left(s,z_{i}^N(s), \alpha\left(s,\varphi_{i}(s)\right),z_{j}^N(s)\right)-\frac{1}{N}\sum_{j=1}^Nf\left(s,\hat{z}_{i}(s), \alpha\left(s,\varphi_{i}(s)\right),z_{j}^N(s)\right),\nonumber\\
D_{i}^2(s)&:=&\frac{1}{N}\sum_{j=1}^Nf\left(s,\hat{z}_{i}(s), \alpha\left(s,\varphi_{i}(s)\right),z_{j}^N(s)\right)
-\frac{1}{N}\sum_{j=1}^Nf\left(s,\hat{z}_{i}(s), \alpha\left(s,\varphi_{i}(s)\right),\hat{z}_{j}(s)\right),\nonumber\\
D_{i}^3(s)&:=&\frac{1}{N}\sum_{j=1}^Nf\left(s,\hat{z}_i(s), \alpha\left(s,\varphi_{i}(s)\right),\hat{z}_{j}(s)\right)-\int_{\mathbb{R}}f\left(s,\hat{z}_i(s),\alpha\left(s,\varphi_i(s)\right),y\right)\mu_s(dy)\nonumber.
\end{eqnarray}
By the Lipschitz continuity of $f$ and $\alpha$, there exists a constant $C>0$ independent of $N$ such that
\begin{eqnarray}
|D_{i}^1+D_{i}^2|\leq C \sum_{j=1}^N(1/N)\left(|z^N_i-\hat{z}_i| + |z^N_j-\hat{z}_j|\right)\label{eq:mvprf4}.
\end{eqnarray}
From (\ref{eq:mvprf1})-(\ref{eq:mvprf4}), it follows that
\begin{eqnarray}
&&\sup_{0\leq s \leq t}\left| z^N_i(s)-\hat{z}_i(s)\right| \leq C \int_{0}^t\left| z^N_i(s)-\hat{z}_i(s)\right|ds\nonumber\\
&&\hspace{0.5in}+C \int_{0}^t(1/N)\sum_{j=1}^N\left| z^N_j(s)-\hat{z}_j(s)\right|ds+\int_{0}^tD_{i}^3(s)ds\label{eq:mvprf5}
\end{eqnarray}
which gives
\begin{eqnarray}
&&\sum_{i=1}^N\sup_{0\leq s \leq t}\left| z^N_i(s)-\hat{z}_i(s)\right| \nonumber\\
&&\leq 2C\sum_{i=1}^N\int_{0}^t\left| z^N_i(s)-\hat{z}_i(s)\right|ds
+\int_{0}^t\sum_{i=1}^ND_{i}^3(s)ds\nonumber\\
&&\leq 2C\sum_{i=1}^N\int_{0}^t\hspace{-0.05in}\sup_{0\leq \tau \leq s}\left| z^N_i(s)-\hat{z}_i(s)\right|ds+\int_{0}^t\sum_{i=1}^ND_{i}^3(s)ds.
\label{eq:mvprf6}
\end{eqnarray}
We consider the last item in (\ref{eq:mvprf6}). We have that
\begin{eqnarray}
&&\hspace{-0.5in}\mathbb{E}\left| D_{i}^3(t)\right|^2\leq\int_{0}^t\mathbb{E}\bigg| \frac{1}{N}\sum_{j=1}^Nf\left(s,\hat{z}_{i}(s), \alpha\left(s,\varphi_{i}(s)\right),\hat{z}_{j}(s)\right)\nonumber\\
&&\hspace{0.5in}-\int_{\mathbb{R}}f\left[s,\hat{z}_i(s),\alpha\left(s,\varphi_i(s)\right),y\right]\mu_s(dy)\bigg|^2\label{eq:mvprf7}.
\end{eqnarray}
Define now $g(s,\hat{z}_i,x):=$ $f\left(s,\hat{z}_{i}(s), \alpha\left(s,\varphi_{i}(s)\right),x\right)-$ $f\left[s,\hat{z}_i(s),\alpha\left(s,\varphi_i(s)\right),\mu_s\right]$ and recall that $\varphi_i(t)$ depends on $\hat{z}_i(t)$ through (\ref{eq:mv-appr-obs-2}). Therefore, for $j\neq k$, we have
\begin{eqnarray}
\mathbb{E}\left[g(s,\hat{z}_i,\hat{z}_j)g(s,\hat{z}_i,\hat{z}_k)\right]=0, \label{eq:mvprf8}
\end{eqnarray}
which implies that there are no cross terms in (\ref{eq:mvprf7}). Consequently, by the boundedness of $f$ and the inequality that $\left(\sum_{i=1}^N x_i\right)^2$ $\leq N\sum_{i=1}^N x_i^2$, we obtain
\begin{eqnarray}
\mathbb{E}\left| D_{i}^3(t)\right|^2\leq k_1(t)/N=O(1/N)\label{eq:mvprf9}, 
\end{eqnarray}
where $k_1$ is an increasing function of $t$ but independent of $N$. Now by (\ref{eq:mvprf6}), (\ref{eq:mvprf9}) and Gronwall's lemma
\begin{eqnarray}
\sum_{i=1}^N\mathbb{E}\sup_{0\leq t \leq T}\left| z^N_i(t)-\hat{z}_i(t)\right |= O\left(\frac{1}{\sqrt{N}}\right),
\end{eqnarray}
which yields $\mathbb{E}\sup_{0\leq t \leq T}\left| z^N_i(t)-\hat{z}_i(t)\right |= O\left(\frac{1}{\sqrt{N}}\right)$. 
\end{proof}

\subsection{A Completely Observed Stochastic Optimal Control Problem for the Generic Agent} \label{sec:socp-gen}
The widely adopted procedure in the literature in the construction of a completely observed stochastic optimal control problem from the partially observed one is to use the unnormalized conditional density in the separation principle since it is known that the cost function under an equivalent measure is linear in the initial unnormalized conditional density. Furthermore, the dynamics of the unnormalized conditional density is also a linear functional of the initial density and hence one can significantly benefit from the unnormalized construction including the closed form computation of the first and second order functional (Fr\'{e}chet) derivatives with respect to the density-valued state component. However, in order to proceed with the unnormalized form, following the standard assumptions in the literature (see \cite{benes-karatsaz}, \cite{fleming81} and \cite{Bensoussan-book}), we shall restrict ourselves to the state dynamics in the following form:
\begin{itemize}
\item [(A6)] The function $f(t,x,u,y)$ is linear in the control: $f(t,x,u,y)=f^\dagger(t,x,y)+u$.
\end{itemize}
Recall that if the probability measure flow $\left(\mu_t\right)_{0\leq t \leq T}$ is fixed, $f[t,x,u,\mu]$ and $L[x,u,\mu]$ become a function of $(t,x,u)$ and as before, we denote 
\begin{eqnarray}
f^\ast(t,x,u):=f[t,x,u,\mu], ~ L^\ast(x,u):=L[x,u,\mu]\nonumber.
\end{eqnarray}
We need a further condition that the measure flow satisfies so that the induced functions are well behaved, see Definition 3 and Proposition 4 of \cite{hmc-mfg-main}. 
\begin{definition}
A probability measure flow $\mu_t$ on $[0,T]$ is in $\mathcal{M}_{[0,T]}$ if there exists $\beta \in (0,1]$ such that for any bounded and Lipschitz continuous function $\psi$ on $\mathbb{R}$,
\begin{eqnarray}
\sup_{1\leq j\leq K} \left|\int_{\mathbb{R}}\psi(y)\mu^j_{t'}(dy)-\int_{\mathbb{R}}\psi(y)\mu^j_{t''}(dy) \right|\leq B|t'-t''|^\beta,\label{eq:holder}
\end{eqnarray}
for all $t',t'' \in [0,T]$ where for given $\mu_t$, $B$ depends on upon the Lipschitz coefficient of $\psi$. 
\end{definition} 
In order to obtain the unnormalized filtering equations for the MV SDE, we first need to define an exponential martingale for the change of measure argument. Consider first the following MV SDE
\begin{eqnarray}
\hspace{-0.2in}dz^o(t)&=&f\left[t,z^o(t),\alpha(t),\mu_t\right]dt+ \sigma dw(t), \label{eq:mv-filt-eqn}\\
\hspace{-0.2in}dy(t)&=&h\left(t, z^o(t)\right)dt+ d\nu(t),~0 \leq t \leq T, \label{eq:mv-appr-obs-2a}
\end{eqnarray}
with $z^o(0)=z(0)$, $y(0)=0$ where $\left(\alpha(t)\right)_{0\leq t \leq T}$ is an admissible control, i.e., an $\mathcal{F}_t^y$-adapted process taking values in $U$. In the rest of this section, we assume that $\left(\mu_t\right) \in \mathcal{M}_{[0,T]}$ is fixed with exponent $\beta$ and we follow the approach presented in \cite{benes-karatsaz}. Hence, we define the process
\begin{eqnarray}
w^{-}(t)=w(t)-\int_{0}^tu(s)ds\label{eq:unnorm-bm}
\end{eqnarray}
and let introduce a new measure $\tilde{P}$ such that $\frac{dP}{d\tilde{P}}=M_{0}^{T}(u)$ where
\begin{eqnarray}
&&M_s^t(u):=\exp\biggl\{\int_{s}^t\left( u(\tau)dw^-(\tau)+h\left(\tau,z^o\left(\tau\right)\right)d\nu(\tau)\right)\nonumber\\
&&\hspace{0.5in}-\frac{1}{2}\int_{s}^t\left(|u(\tau)|^2+|h\left(\tau,z^o\left(\tau\right)\right)|^2\right)d\tau\biggr\}\label{ex:mart-unnorm}.
\end{eqnarray}
It now follows from Girsanov's theorem that under $\tilde{P}$, $y(t)$ is a Brownian motion. Define now the backward differential operator and its adjoint as follows: For $a \in U$
\begin{eqnarray}
\mathcal{J}_t^a&:=&\frac{1}{2}\partial^2_{xx}+(f^\dagger+a)\partial_x, \nonumber\\
\mathcal{J}_t^{\ast a}&:=& \frac{1}{2}\partial^2_{xx}-(f^\dagger+a)\partial_x-\partial_xf^\dagger\label{eq:fwd-bck-op}.
\end{eqnarray}
Similarly, for a given control process $u \in \mathcal{U}$, where $\mathcal{U}:=\bigl\{u(\cdot) \in U: u(t)~\text{is}~\mathcal{F}_t^{y}-\text{adapted}~\text{and}$ $~\mathbb{E}\int_{0}^T|u(t)|^2dt<\infty\bigr\}$, we denote the family of operators by
\begin{eqnarray}
\{\mathcal{J}_t^u:=\mathcal{J}_t^{u_t},~ \mathcal{J}_t^{\ast u}:=\mathcal{J}_t^{\ast u_t},~ 0\leq t\leq T\}\label{eq:fwd-bck-op-fam}.
\end{eqnarray}
Consider a random function $\{q(t,z;\tau,\kappa);\tau<t\leq T\}$ with $(z,\kappa) \in \mathbb{R}\times\mathbb{R}$ and assume that it is a fundamental solution of the Zakai equation (which is known to exist \cite{benes-karatsaz}) given by:
\begin{eqnarray}
&&\hspace{-0.19in}dq(t,z;\tau,\kappa)=\mathcal{J}_t^{\ast u}q(t,z;\tau,\kappa)dt+h(t,z)q(t,z;\tau,\kappa)dy(t), \nonumber\\
&&\hspace{-0.33in}\lim_{t\downarrow \tau }q(t,z;\tau,\kappa)=\delta_{z-\kappa},~\tau \leq t \leq T,~\tilde{P}-a.s.\label{eq:port-mck-fund-zakai}
\end{eqnarray}
Let $p(z)$ denote the density of $z(0)$ and set $q_t(z;\kappa):=q(t,z;0,\kappa)$. Then by \cite[Theorem 4.1]{benes-karatsaz} the function
\begin{eqnarray}
p_t(z)= \frac{\int_{\mathbb{R}}q_t(z;\kappa)p(\kappa)d\kappa}{\int_{\mathbb{R}}\int_{\mathbb{R}}q_t(z;\kappa)p(\kappa)d\kappa dz}, \label{eq:norm-dens-1}
\end{eqnarray}
is a version of the conditional density of $P\left(z^o(t) \in A|\mathcal{F}_t^{y}\right)$ i.e., for $\ell \in C_b\left(\mathbb{R}\right)$ and $A \in \mathcal{B}(\mathbb{R})$,
\begin{eqnarray}
 \mathbb{E}\left[\ell\left(z^o(T)\right)|\mathcal{F}_T^{y}\right]=\int_{\mathbb{R}}p_T(z)\ell(z)dz~~\tilde{P}-a.s.\label{eq:port-mck-cond-expec-1}
\end{eqnarray}
Finally, let us set $\tilde{\varphi}(t,z):=\int_{\mathbb{R}}q_t(z;\kappa)p(\kappa)d\kappa$. Then, by \cite[Theorem 4.1]{benes-karatsaz}, we obtain
\begin{eqnarray}
d\tilde{\varphi}(t,z)&=&\mathcal{J}_t^{\ast u}\tilde{\varphi}(t,z)dt+h(t,z)\tilde{\varphi}(t, z)dy(t),~ 0 \leq t \leq T, \nonumber\\
\tilde{\varphi}(0,z)&=&p(z), \label{eq:dens-filt-main}
\end{eqnarray}
where (\ref{eq:dens-filt-main}) is the Zakai equation for the unnormalized conditional density which will serve as the infinite dimensional state process of the completely observed optimal control problem. It is worthwhile recalling at this point that the goal is to solve the partially observed SOCP at the infinite population limit for which we aim to obtain an HJB equation in a function space by constructing the associated completely observed SOCP. We now proceed with such a derivation the first step of which requires one to define the cost in terms of the conditional density process and the new measure defined via (\ref{ex:mart-unnorm}). 

Indeed, consider the cost function and note that
\begin{eqnarray}
J(u;p)&=&\mathbb{E}\int_{0}^TL\left[z^o(t), u(t), \mu_t\right]dt \nonumber\\
&=&\tilde{\mathbb{E}}\int_{0}^T\left(\int_{\mathbb{R}}\int_{\mathbb{R}}L\left[z,u(t), \mu_t\right]q_t(z;x)p(x)dxdz\right)dt, \label{eq:equiv-cost-unnorm}
\end{eqnarray}
where $\tilde{\mathbb{E}}$ denotes expectation with respect to $\tilde{P}$ and (\ref{eq:equiv-cost-unnorm}) follows from \cite[Equation 5.1]{benes-karatsaz}. Note that we explicitly indicate dependence on the initial condition. Define the following space of functions:
\begin{eqnarray}
\mathsf{E}_k\bydef\left\{p\in L^1(\mathbb{R});\|p\|_k=\int_{\mathbb{R}}\left(1+|z|^k\right)|p(z)|dz<\infty\right\}\label{eq:norm-space}.
\end{eqnarray}
In the derivation of the HJB equation we consider the function space (\ref{eq:norm-space}) where for the expected total cost incurred during $[T-\tau, T]$, $0\leq \tau \leq T$, we assume that the initial condition satisfies $p_{T-\tau}(z)\in \mathsf{E}_k$ and hence, for a constant control $u_t=a \in U$ for all $0\leq t \leq T$, we have $J:[0,T]\times\mathsf{E}_k\rightarrow \mathbb{R}$. 
\begin{definition}\cite{benes-karatsaz}\label{def:spaceM}
Consider a probability space $\bigl(\Omega,\mathcal{F},\{\mathcal{F}_t\}_{t\geq 0},P\bigr)$ and an $\mathsf{E}_l$-valued stochastic process $\left(\eta(t,z)\right)_{0\leq t \leq T}$ adapted to the filtration $\mathcal{F}_t$ with $l \geq 0$. If
\begin{eqnarray}
\mathbb{E}\int_{0}^T \left(\int_{\mathbb{R}}\left(1+|z|^l\right)|\eta(t,z)|\right)^jdt<\infty,  \label{eq:norm-exp-space}
\end{eqnarray}
than we say that $\eta(t,z) \in \mathsf{M}_{l,j}[\mathcal{F}_t]$. 
\end{definition}
A continuous cost functional is next defined by setting: 
\begin{eqnarray}
V^u(\tau,p):=\mathbb{E}\int_{T-\tau}^TL\left[z^o(t), u(t), \mu_t\right]dt, \label{eq:val-func}
\end{eqnarray}
where $(\tau, p) \in [0,T]\times \mathsf{E}_k$ and $z^o(T-\tau)$ has a distribution with density $p$. We now recall the definition of the Fr\'{e}chet derivative. A function $f:\mathsf{X}\rightarrow \mathsf{Y}$ is said be Fr\'{e}chet differentiable at $x$ if there exists $\mathsf{D}f(x) \in \mathcal{L}(\mathsf{X};\mathsf{Y})$, where $\mathcal{L}(\mathsf{X};\mathsf{Y})$ denote the space of bounded linear operators from $\mathsf{X}$ to $\mathsf{Y}$, such that $\lim_{0\neq \| h \| \rightarrow 0} \frac{\| f(x+h)-f(h)-\mathsf{D}f(x)\cdot h\|_{\mathsf{Y}}}{\| h \|_{\mathsf{X}}}=0$. One can define higher order Fr\'{e}chet derivatives in a similar manner. For instance, the second order Fr\'{e}chet derivative of $f$ at $x \in \mathsf{X}$ satisfies that $\mathsf{D}^2f(x)\in \mathcal{L}\left(\mathsf{X};\mathcal{L}(\mathsf{X};\mathsf{Y})\right)$. We define the following assumptions. 
\begin{itemize}
\item [(A7)] The function $V(\tau,p):[0,T]\times \mathsf{E}_k\rightarrow \mathbb{R}$ possesses continuous first derivatives in $\tau$ and first and second order Fr\'{e}chet derivatives $\mathsf{D}V(\tau,p)$ and $\mathsf{D}^2V(\tau,p)$ with respect to $p$ in the form of linear functional and a bilinear form, respectively, which are given by
\begin{eqnarray}
\mathsf{D}V(\tau,p)[\eta]&=&\int_{\mathbb{R}}V_p\left(\tau,p\right)(z)\eta(z)dz, \nonumber\\
\mathsf{D}^2V(\tau,p)[\eta,\theta]&=&\int_{\mathbb{R}}\int_{\mathbb{R}}V^2_{pp}\left(\tau,p\right)(z,z')\eta(z)\theta(z')dzdz', ~\eta(\cdot), \theta(\cdot)  \in \mathsf{E}_l, \nonumber
\end{eqnarray}
where the kernels $V_p\left(\tau,p\right)(z)$ and $V^2_{pp}\left(\tau,p\right)(z,z')$ are continuous in their arguments and satisfy the following:
\begin{eqnarray}
|V_p\left(\tau,p\right)(z)|&\leq& \zeta_1\left(\tau,\|p \|_{l}\right)\left(1+|z|^l\right), \nonumber\\
|V^2_{pp}\left(\tau,p\right)(z,z')|&\leq& \zeta_2\left(\tau,\|p \|_{l}\right)\left(1+|z|^l\right)\left(1+|z'|^l\right), \label{eq:frechet-kernels}
\end{eqnarray}
for $\zeta_1, \zeta_2$ being continuous functions on $[0,T]\times \mathbb{R}_{+}$.
\item [(A8)] Consider (\ref{eq:dens-filt-main}). Assume that 
\begin{eqnarray}
\mathcal{J}_t^{\ast u}\tilde{\varphi}(z) & \in &  \mathsf{M}_{l,1}[\mathcal{F}_t^y]\cap\mathsf{M}_{l,2}[\mathcal{F}_t^y], \nonumber\\
h(t,z)\tilde{\varphi}(t, z)  & \in & \mathsf{M}_{l,2}[\mathcal{F}_t^y]\cap \mathsf{M}_{l,4}[\mathcal{F}_t^y], \label{eq:cond-zakai}
\end{eqnarray}
for some $l \geq 0$. 
\end{itemize}
Let $u_t=a$, $0\leq t \leq T$ and define $N_t(x):=\int_{\mathbb{R}}L[z, a, \mu_t]$ $q(T,z;t,x)dz$ and $s:=T-\tau$. Notice that with this notation, we have that $V^a(\tau,p)=\tilde{\mathbb{E}}\left(N_s,p\right)$ where we use the notational convention that $(\alpha,\beta) := \int_{\mathbb{R}}\alpha(z)\beta(z)dz$. Consider $V^a$ and note that due to the linearity in the infinite dimensional component, for a fixed control $a$, the Fr\'{e}chet derivatives satisfy 
\begin{eqnarray}
\mathsf{D}V^a(\tau,p)[\eta]&=& \int_{\mathbb{R}}V^a_p(\tau,p)(x)\eta(x)dx, ~ \eta(\cdot) \in \mathsf{E}_{l-1}, \nonumber
\end{eqnarray}
where $V^a_p(\tau,p)(x)=\tilde{\mathbb{E}}N_{s}(x)$. Similarly, 
\begin{eqnarray}
\mathsf{D}^2V^a(\tau,p)[\eta,\theta]&=& \int_{\mathbb{R}^2}V^a_{pp}(\tau,p)(x,x')\eta(x)\theta(x')dxdx', \nonumber
\end{eqnarray}
where $\eta(\cdot), \theta(\cdot)\in \mathsf{E}_{l-1}$ and $V^a_{pp}(\tau,p)(x,x')=0$. Therefore, the first set of conditions of (A7) are already satisfied when the unnormalized conditional density is considered. 

We are now in the position to provide a HJB equation that the function given in (\ref{eq:val-func}) satisfies. 
\begin{proposition}\label{pro:mortensen}
Consider the probability space $\bigl(\Omega,\mathcal{F},\{\mathcal{F}_t^y\}_{t\geq 0},P\bigr)$ and any admissible control process $\{u_t;0\leq t \leq T\} \in \mathcal{U}$ along with $\left(z^o(t),y(t),\nu(t),w(t)\right)_{0\leq t \leq T}$. Assume that (A1), (A2), (A3), (A5), (A6) and (A8) hold. If the following equation 
\begin{eqnarray}
\hspace{-0in}\frac{\partial V(\tau,p)}{\partial \tau}&=&\frac{1}{2}\mathsf{D}^2V(\tau,p)\cdot[h(T-\tau)p, h(T-\tau)p]\nonumber\\
&&+\min_{\theta \in U}\left\{\left(\mathcal{J}_{T-\tau}^\theta\mathsf{D}V(\tau,p)(\cdot),p\right)+\left(L[\cdot, \theta, \mu_{T-\tau}],p\right)\right\}, \nonumber\\
 V(0,p)&=&0, ~(\tau,p) \in [0,T]\times \mathsf{E}_k, \label{eq:prop-mortensen}
\end{eqnarray}
has a solution $V(\tau,p):=[0,T]\times \mathsf{E}_k \rightarrow \mathbb{R}$ which satisfies the assumptions defined in (A7), then $V(\tau,p)$ is a lower bound to the cost achieved under the control process $u$, i.e.,:
\begin{eqnarray}
V^u(\tau,p):=\mathbb{E}\int_{T-\tau}^TL[z^o(t),u(t),\mu_t]dt\geq  V(\tau,p), \label{eq:prop-mort-lb}
\end{eqnarray}
for any $(\tau,p) \in [0,T]\times\mathsf{E}_k$.
\end{proposition}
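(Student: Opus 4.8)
The plan is to run a verification-type (one-sided) argument built on an infinite-dimensional It\^o formula applied along the Zakai trajectory (\ref{eq:dens-filt-main}), and then to exploit the minimality encoded in the HJB equation (\ref{eq:prop-mortensen}). Fix an admissible $u \in \mathcal{U}$ and a pair $(\tau,p) \in [0,T]\times\mathsf{E}_k$, and let $\tilde\varphi(t,\cdot)$, $t \in [T-\tau,T]$, denote the solution of (\ref{eq:dens-filt-main}) with $\tilde\varphi(T-\tau,\cdot)=p$. Since the remaining horizon at physical time $t$ is $T-t$, I would study the scalar process $t\mapsto V(T-t,\tilde\varphi(t))$, whose endpoints are $V(\tau,p)$ at $t=T-\tau$ and $V(0,\tilde\varphi(T))=0$ at $t=T$ by the boundary condition in (\ref{eq:prop-mortensen}). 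Using the $\tau$-derivative and the Fr\'echet derivatives postulated in (A7), It\^o's formula together with (\ref{eq:dens-filt-main}) --- recalling that under $\tilde P$ the process $y$ is a Brownian motion, so the diffusion coefficient is $h(t,\cdot)\tilde\varphi(t,\cdot)$ --- gives
\begin{eqnarray}
dV(T-t,\tilde\varphi(t)) &=& \Big[-\partial_\tau V + \mathsf{D}V[\mathcal{J}_t^{\ast u}\tilde\varphi] + \tfrac{1}{2}\mathsf{D}^2V[h\tilde\varphi,h\tilde\varphi]\Big]\,dt \nonumber\\
&& + \mathsf{D}V[h\tilde\varphi]\,dy(t), \nonumber
\end{eqnarray}
all terms evaluated at $(T-t,\tilde\varphi(t))$.

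Next I would convert the forward drift term into a backward one by integration by parts: since $\mathsf{D}V(\tau,p)[\eta]=(V_p(\tau,p),\eta)$ and $\mathcal{J}_t^{\ast u}$ is the formal adjoint of $\mathcal{J}_t^u$ in (\ref{eq:fwd-bck-op}), one has $\mathsf{D}V[\mathcal{J}_t^{\ast u}\tilde\varphi]=(\mathcal{J}_t^{u}V_p,\tilde\varphi)$, the boundary terms at infinity vanishing by the growth bounds (\ref{eq:frechet-kernels}) on $V_p$ together with the membership (\ref{eq:cond-zakai}) of the Zakai coefficients in the weighted spaces $\mathsf{M}_{l,j}$. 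Substituting the HJB equation (\ref{eq:prop-mortensen}) for $\partial_\tau V$ at $(T-t,\tilde\varphi(t))$ (note $T-(T-t)=t$, so $\mathcal{J}_{T-\tau}^\theta=\mathcal{J}_t^\theta$ and $\mu_{T-\tau}=\mu_t$) cancels the second-order term and yields
\begin{eqnarray}
dV &=& \Big[(\mathcal{J}_t^{u}V_p,\tilde\varphi) - \min_{\theta\in U}\big\{(\mathcal{J}_t^{\theta}V_p,\tilde\varphi)+(L[\cdot,\theta,\mu_t],\tilde\varphi)\big\}\Big]\,dt \nonumber\\
&& + \mathsf{D}V[h\tilde\varphi]\,dy(t). \nonumber
\end{eqnarray}
Bounding the minimum above by its value at the realized control $\theta=u_t$ gives the pointwise inequality $dV \geq -(L[\cdot,u_t,\mu_t],\tilde\varphi(t))\,dt + \mathsf{D}V[h\tilde\varphi]\,dy(t)$.

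Finally I would integrate over $[T-\tau,T]$ and take $\tilde{\mathbb{E}}$. The stochastic integral is a genuine zero-mean $\tilde P$-martingale --- this is exactly the role of (A8) and the kernel bounds (\ref{eq:frechet-kernels}), since they force the scalar integrand $\mathsf{D}V[h\tilde\varphi]=\int_{\mathbb{R}}V_p\,h\,\tilde\varphi\,dz$ to satisfy $\tilde{\mathbb{E}}\int|\mathsf{D}V[h\tilde\varphi]|^2dt<\infty$ through $|V_p(z)|\leq \zeta_1(1+|z|^l)$ and $h\tilde\varphi\in\mathsf{M}_{l,2}$. Using $V(0,\tilde\varphi(T))=0$ this leaves $V(\tau,p)\leq \tilde{\mathbb{E}}\int_{T-\tau}^T(L[\cdot,u_t,\mu_t],\tilde\varphi(t))\,dt$, and the right-hand side is precisely the partial-horizon version of the unnormalized representation (\ref{eq:equiv-cost-unnorm}), which by the change-of-measure identity equals $\mathbb{E}\int_{T-\tau}^TL[z^o(t),u(t),\mu_t]\,dt=V^u(\tau,p)$, establishing (\ref{eq:prop-mort-lb}).

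I expect the main obstacle to be the rigorous justification of the infinite-dimensional It\^o formula and, in tandem, of the integration by parts producing the adjoint $\mathcal{J}_t^u$: both require simultaneously controlling the growth at infinity of $V_p$, $V^2_{pp}$ and of the Zakai coefficients, and upgrading the resulting stochastic integral from a local to a true martingale. This is the technical purpose served by the weighted spaces $\mathsf{E}_k$, $\mathsf{M}_{l,j}$ and the estimates (A7)--(A8); verifying that the composite process $V(T-t,\tilde\varphi(t))$ remains in the regime where these bounds apply uniformly in $t$ is the delicate point.
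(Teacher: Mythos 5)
Your verification argument is correct and coincides with the paper's own proof: the paper disposes of this proposition by citing \cite[Theorem 5.2]{benes-karatsaz}, whose proof is exactly the argument you outline --- It\^o expansion of the time-reversed value function along the Zakai trajectory under $\tilde P$, passage to the adjoint $\mathcal{J}_t^u$, the HJB minimality bound at $\theta=u_t$, the vanishing of the stochastic integral in expectation, and the change-of-measure identity (\ref{eq:equiv-cost-unnorm}) to return to $V^u(\tau,p)$. The technical caveats you flag (justifying the infinite-dimensional It\^o formula and upgrading the local martingale to a true one via the weighted bounds in (A7)--(A8)) are precisely the conditions that cited theorem is designed to handle, so your sketch is faithful to the intended proof.
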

Under the assumptions (A1)-(A3), (A5)-(A8) and the condition that the measure flow $(\mu_t)_{0\leq t \leq T} \in \mathcal{M}_{[0,T]}$ is fixed, the proof of this proposition follow from \cite[Theorem 5.2]{benes-karatsaz}. 

Notice now that the PDE described in (\ref{eq:prop-mortensen}) is difficult to analyze (notice the existence of a function space in the domain of $V$); indeed, the solution to such an equation is not completely understood in the literature. Hence, in order to proceed with the analysis of the PO MFG system, we assume the following. 
\begin{itemize}
\item[]\hspace{-0.4in}(A9) The equation (\ref{eq:prop-mortensen}) has  a unique solution $V(t,p):[0,T]\times\mathsf{E}_k\rightarrow \mathbb{R}$ with $V(t,p) \in C^{1,2}_{t,p}\left([0,T]\times \mathsf{E}_k\right)$. 
\end{itemize}
Notice that due to this assumption, the best response control process can now be given in the following separated form:
\begin{eqnarray}
u^\ast &=&\{u^\ast(t,p)=a^\ast\left(T-t, p_t\right);0 \leq t \leq T\}, \nonumber\\
a^\ast\left(\tau, p\right)&=&\arg\min_{a \in U} \left\{\left(\mathcal{J}_{T-\tau}^a\mathsf{D}V(\tau,p)(\cdot),p\right)+\left(L\left[\cdot, a, \mu_{T-\tau}\right],p\right)\right\}, \label{eq:cont-sep}
\end{eqnarray}
if the Zakai equation (\ref{eq:dens-filt-main}) is strongly solvable for an $\mathcal{F}_t^y$-adapted random function $\tilde{\varphi}(t,z)$ with $u(t)=a^\ast\left(T-t, \frac{\tilde{\varphi}(t)}{\left(\tilde{\varphi}(t),1\right)}\right)$. 

Following standard procedures, e.g., see \cite[Theorem 4]{sen-caines-po-sicon}, one can also show that the value function $\bar{V}: [0,T]\times\mathsf{E}_k\rightarrow \mathbb{R}$ defined as $\bar{V}(t,p):=\inf_{u \in \mathcal{U}} V^u(t, p)$ is a solution to the HJB equation given in (\ref{eq:prop-mortensen}).  

To summarize: in order to obtain its optimal control,  a generic agent solves its partially observed control problem defined by (\ref{eq:mv-filt-eqn}), (\ref{eq:mv-appr-obs-2a}) and (\ref{eq:equiv-cost-unnorm}) and obtains the optimal control law in the feedback form given in (\ref{eq:cont-sep}).  

However, as in the completely observable and hence in the finite dimensional cases, the existence of feedback control policies is in general not sufficient for the validity of the fixed point argument. Therefore, we further assume the following. 
\begin{enumerate}
\item[]\hspace{-0.4in}(A10) For each $(\mu_t) \in \mathcal{M}_{[0,T]}$, $u^\ast\left(t,p|\left(\mu_t\right)_{0\leq t \leq T}\right)$ is continuous in $(t,p) \in [0,T]\times\mathsf{E}_k$ and Lipschitz continuous in $p \in \mathsf{E}_k$. 
\end{enumerate}
Before we proceed with the fixed point analysis we remark the following. 
\begin{remark}\label{rem:sob}
For the stochastic partial differential equation (SPDE) defined in (\ref{eq:dens-filt-main}), a solution via a Sobolev space characterization is considered in \cite{xyz} where the solution is defined on $H^1$ where $H^1:=\left\{f\in L^2\left(\mathbb{R}\right):\frac{\partial f}{\partial x} \in L^2\left(\mathbb{R}\right)\right\}$ with the norm $\| \partial k\|_{H^1}:=\{\int_{\mathbb{R}}\left(|k(x)|^2+|\frac{\partial k}{\partial x}|^2\right)dx\}^{1/2}$. Therefore, the infinite dimensional state process takes values in $H^1$ and one can obtain a similar expression for the HJB equation for which the stochastic calculus for Hilbert space-valued stochastic process can be used. 
\end{remark}
\section{Analysis of the Partially Observed MFG System}\label{sec:soln-fo-socp}
Following the widely used approach in MFG theory, it is now required to demonstrate that when the completely observed SOCP derived in Section \ref{sec:socp-gen} is considered and solved by each generic agent, the corresponding strategies should collectively replicate the aggregate behavior, which is the system mean field. This corresponds to the fixed point argument of MFG analysis which is also referred to as Nash Certainty Equivalence (NCE). For such an analysis, it suffices to prove that the MFG system has a unique solution which can be achieved by proving that starting with an exogenous measure, $\mu_{(\cdot)}^o$, the composition map below has a fixed point in the space of probability measures.
\begin{align*}
& \begin{array}[c]{ccccc}
 \mu^o_{(\cdot)}&\stackrel{\textrm{MV}}{\longrightarrow}& z^o(\cdot)&\stackrel{\textrm{NLF}}{\longrightarrow}& \tilde{\varphi}(\cdot)\\
\uparrow&&&&\downarrow\scriptstyle{\textrm{HJB}}\\
 z^o(\cdot)  &\stackrel{\textrm{MV}}{\longleftarrow}& u^o(\cdot,p)  &\stackrel{\textrm{BRC}}{\longleftarrow} &  V(\cdot,p) 
\end{array}
\end{align*}
We now introduce some preliminary material about the metrics on a space of probability measures which can be found in \cite{hmc-mfg-main} and \cite{sznitman}. Let $C\left([0,T];\mathbb{R}\right)$ be the space of continuous functions on $[0,T]$. For $x, y \in C\left([0,T];\mathbb{R}\right) $ define the norm $\| x-y\|:=\sup_{t \in [0,T]}|x(t)-y(t)|$. Then, $\left(C\left([0,T];\mathbb{R}\right), \|\cdot\|\right)$ is a Banach space. Consider also the metric $\rho(x,y)=\sup_{t \in [0,T]}|x(t)-y(t)| \wedge 1$; one can show that the metric space $\left(C\left([0,T];\mathbb{R}\right), \rho\right)$ is complete and separable (Polish). Let $C_\rho:=C\left([0,T];\mathbb{R}\right)$. On $\left(C\left([0,T];\mathbb{R}\right), \|\cdot\|\right)$ we define the $\sigma$-algebra $\mathcal{F}$ generated by the cylindrical sets of the form $\big\{x(\cdot) \in C_\rho: x(t_i) \in \mathcal{B}_i; t_i \in [0,T],$  $i=1,\ldots, l\big\}$ where each $\mathcal{B}_i \in \mathcal{B}\left(\mathbb{R}\right)$ and $l$ is a positive integer. Let $\mathcal{M}\left(C_\rho\right)$ denote the space of all probability measures $m$ on $\left(C_\rho,\mathcal{F}\right)$. $\mathcal{M}\left(C_\rho\times C_\rho\right)$  denotes the space of all probability measures on the product space. Define the canonical process $X$ with the sample space $C_\rho$; $i.e., X_t(\xi)=\xi_t, ~\xi \in C_\rho$.
\begin{definition}\label{def:was-metric}
For $m_1,m_2 \in\mathcal{M}\left(C_\rho\right)$, the Wasserstein metric is defined as follows:
\begin{eqnarray}
D_T(m_1,m_2)=\inf_{\Upsilon \in \Pi(m_1,m_2)}\int_{C_{\rho}\times C_{\rho}}\left(\sup_{s \leq T}
\left|X_s(\xi_1)-X_s(\xi_2)\right| \wedge 1\right)d\Upsilon(\xi_1,\xi_2)\label{eq:wass-metric}
\end{eqnarray}
 where 
\begin{eqnarray}
 \Pi(m_1,m_2)&:=& \big\{\Upsilon \subset \mathcal{M}(C_\rho \times C_\rho):\Upsilon(A\times C([0,T];\mathbb{R}))=m_1(A)~\mbox{and}\nonumber\\
&&\hspace{0.5in} \Upsilon(C([0,T];\mathbb{R})\times A)=m_2(A),~ A \in \mathcal{B}(C([0,T];\mathbb{R}))\big\} \nonumber.
 \end{eqnarray} 
\end{definition}
Note that the metric space $\left(\mathcal{M}\left(C_\rho\right), D_T\right)$ is also Polish.

We continue with the existence and uniqueness proof for MV SDEs in the partially observed setup which is based on a fixed point argument in the space $\mathcal{M}\left(C_\rho\right)$. Hence, for $1\leq i \leq N$, consider first the following SDEs:
\begin{eqnarray}
\hspace{-0.2in}dz_i^o(t)&=&f\left[t,z_i^o(t),\alpha\left(t,\tilde{\varphi}_i(t)\right),\mu_t
\right]dt+ \sigma dw_i(t),  \label{eq:mv-appr-1-a}\\
\hspace{-0.2in}dy_i(t)&=&h\left(t,z_i^o(t)\right)dt+ d\nu_i(t),~\enspace 0 \leq t \leq T,  \label{eq:mv-appr-obs-1-a}
\end{eqnarray}where $z_i^o(0)=z_i(0)$ and $\tilde{\varphi}_i$ is generated by the unnormalized nonlinear filter and $\alpha$ is an admissible control. Let $m \in \mathcal{M}\left(C_\rho\right)$ and observe that one can re-write (\ref{eq:mv-appr-1-a}) by defining the random process $\vartheta(t)$ on $[0,T]$ as follows:
\begin{eqnarray}
&&\vartheta_i(t)=\int_{0}^t\int_{C_\rho}f\left(s,\vartheta_i(s),\alpha\left(s,\tilde{\varphi}_i(s)\right), \xi_s\right)dm(\xi)ds\nonumber\\
&&\hspace{0.5in}+z_i(0)+\int_{0}^t\sigma dw_i(s), ~~0\leq t \leq T.
\label{eq:def-fixed}
\end{eqnarray}
Let us denote the law of $\vartheta_i$ by $\Phi(m) \in \mathcal{M}\left(C_\rho\right)$.

Although the results which we derived in the previous sections hold with time varying observation dynamics, it is simpler to handle the sensitivity analysis of the filtering equation when the observation dynamics are time invariant. We therefore assume the following in the rest of the paper. 
\begin{enumerate}
\item[]\hspace{-0.4in}(A11) The observation dynamics is time invariant: $h(t,x)=h(x)$.
\end{enumerate}
\begin{theorem}\label{the:mv-sde-uniq}
Under (A0)-(A3), (A5) and (A10), there exists a unique consistent solution pair $\bigl(z_i^o(t),\mu_t\bigr)$ with $\mu_t \in \mathcal{M}_{[0,T]}$.
\end{theorem}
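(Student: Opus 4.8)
The plan is to recast the assertion as a fixed-point statement for the map $\Phi:\mathcal{M}(C_\rho)\to\mathcal{M}(C_\rho)$ introduced in (\ref{eq:def-fixed}), and to invoke the Banach fixed-point theorem on the complete metric space $\left(\mathcal{M}(C_\rho),D_T\right)$. A fixed point $m^\ast=\Phi(m^\ast)$ yields a process $\vartheta_i$ whose law is $m^\ast$; taking $\mu_t$ to be the time-$t$ marginal of $m^\ast$ then makes $\left(z_i^o(t),\mu_t\right)$ a consistent solution of (\ref{eq:mv-appr-1-a})--(\ref{eq:mv-appr-obs-1-a}), since by construction the drift in (\ref{eq:def-fixed}) integrates $f$ against $m^\ast$, hence against the law of $z_i^o$ itself, and uniqueness of the fixed point transfers to uniqueness of the consistent pair.

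First I would establish that $\Phi$ is well defined, i.e. that for each fixed $m$ the equation (\ref{eq:def-fixed}) admits a unique strong solution. The one genuinely non-standard feature is that the drift depends on $\tilde\varphi_i(s)$, which is not a pointwise function of $\vartheta_i(s)$ but a \emph{functional} of the whole observation path $y_i$ up to time $s$, the latter being driven by $h(\vartheta_i)$ through (\ref{eq:mv-appr-obs-1-a}). I would therefore run a Picard iteration in path space: given an iterate $\vartheta_i^{(n)}$, form the observation via (\ref{eq:mv-appr-obs-1-a}), solve the Zakai equation (\ref{eq:dens-filt-main}) for $\tilde\varphi_i^{(n)}$, and define $\vartheta_i^{(n+1)}$ by substituting $\alpha\bigl(s,\tilde\varphi_i^{(n)}(s)\bigr)$ into the drift. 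Convergence uses the Lipschitz continuity of $f$ in $(x,y)$ and its (affine) dependence on the control, the Lipschitz continuity of $\alpha$ in the density argument from (A10), and the \emph{robustness} of the nonlinear filter, namely that $y_i\mapsto\tilde\varphi_i$ is locally Lipschitz from the sup-norm on observation paths into $\left(\mathsf{E},\|\cdot\|_{\mathsf{E}}\right)$ under (A5) and the time-invariance (A11), together with the bound $\sup_{\tau\le s}\lvert y_i^1(\tau)-y_i^2(\tau)\rvert\le K\int_0^s\sup_{r\le\tau}\lvert\vartheta_i^1(r)-\vartheta_i^2(r)\rvert\,d\tau$ coming from $\lvert\partial_x h\rvert\le K$ in (A5) and cancellation of the common $\nu_i$.

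The core of the argument is the contraction estimate. Taking $m_1,m_2\in\mathcal{M}(C_\rho)$ and driving the two solutions $\vartheta_i^1,\vartheta_i^2$ with the \emph{same} pair $(w_i,\nu_i)$, I would decompose the drift difference into a measure part, a state part and a filter part. The measure part is controlled directly by $D_t(m_1,m_2)$ through the boundedness and Lipschitz property of $f$ in (A2) and Definition \ref{def:was-metric}; the state part is absorbed by Gronwall; and the filter part is handled by the chain $\lvert\alpha(s,\tilde\varphi_i^1)-\alpha(s,\tilde\varphi_i^2)\rvert\le L_\alpha\|\tilde\varphi_i^1-\tilde\varphi_i^2\|_{\mathsf{E}}\le C\sup_{\tau\le s}\lvert\vartheta_i^1(\tau)-\vartheta_i^2(\tau)\rvert$, again via filter robustness and (A5)/(A11). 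After Gronwall this yields
\begin{eqnarray}
D_t\bigl(\Phi(m_1),\Phi(m_2)\bigr)\le C\int_0^t D_s(m_1,m_2)\,ds,\qquad 0\le t\le T,\nonumber
\end{eqnarray}
whence iterating gives $D_T\bigl(\Phi^n(m_1),\Phi^n(m_2)\bigr)\le \tfrac{(CT)^n}{n!}\,D_T(m_1,m_2)$. For $n$ large the prefactor is below one, so $\Phi^n$ is a contraction and the generalized Banach fixed-point theorem delivers a unique fixed point $m^\ast$.

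Finally I would check that the marginal flow $\mu_t$ lies in $\mathcal{M}_{[0,T]}$: testing against a bounded Lipschitz $\psi$ and using the boundedness of $f$ and the $\sigma\,dw$ term gives the H\"older-$\tfrac12$ bound (\ref{eq:holder}) with $\beta=\tfrac12$, while the second-moment bound from (A0) propagates in time by Gronwall to secure the integrability behind the $\mathsf{E}$-valued filter. I expect the main obstacle to be the filter part of the contraction: one must upgrade the qualitative robustness of the nonlinear filter to a \emph{quantitative}, locally Lipschitz dependence of $\tilde\varphi_i$ on the observation path in the $\|\cdot\|_{\mathsf{E}}$ norm, with constants uniform enough over the relevant family of paths to close the Gronwall loop. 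This is precisely where the regularity of $h$ in (A5) and its time-invariance in (A11) are indispensable, and it is the step that distinguishes this proof from the classical McKean--Vlasov well-posedness argument.
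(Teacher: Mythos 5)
Your proposal follows essentially the same route as the paper's proof: the same map $\Phi$ from (\ref{eq:def-fixed}), the same three-part decomposition of the drift difference (state, filter, measure), the same key step of converting the filter discrepancy into a state-path discrepancy via the locally Lipschitz robust representation of the nonlinear filter together with the Lipschitz continuity of $h$, followed by Gronwall and the Sznitman-type iteration yielding a contraction of an iterate of $\Phi$, with the $\mu_t\in\mathcal{M}_{[0,T]}$ claim handled at the end (the paper cites \cite[Lemma 7]{hmc-mfg-main} where you sketch the H\"older bound directly). Your explicit flagging of the need for a quantitative, locally Lipschitz filter-robustness bound with path-uniform constants is precisely the delicate point in the paper's argument as well, so the proposal is correct and equivalent in substance.
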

\begin{proof}
The proof is a generalization of \cite[Theorem 6]{hmc-mfg-main}, \cite[Theorem 6.12]{NP-Siam2013} and \cite[Theorem 13]{sen-caines-po-sicon} requires to consider an unnormalized conditional density in the control law. For $m,\hat{m} \in \mathcal{M}\left(C_\rho\right)$ let $\vartheta_i(t)$ and $\hat{\vartheta}_i(t)$ be defined by (\ref{eq:def-fixed}) corresponding to $m$ and $\hat{m}$, respectively, with the same initial condition $z_i(0)$. Similarly, let $\tilde{\varphi}(t)$ and $\hat{\tilde{\varphi}}(t)$ be generated by the unnormalized filtering equations for $\vartheta_i(t)$ and $\hat{\vartheta}_i(t)$, respectively. It follows that
\begin{eqnarray}
&&\hspace{0.5in}\sup_{0\leq s \leq t}\left| \vartheta_i(s)- \hat{\vartheta_i}(s)\right|\leq \label{eq:prf-mv-sde-1}\\
&&\int_{0}^t\bigg | \int_{C_\rho} f\left(s, \vartheta_i(s),\alpha\left(s,\tilde{\varphi}(s)\right),\xi_s\right)dm(\xi)-\int_{C_\rho} f\left(s, \hat{\vartheta}_i(s),\alpha\left(s,\hat{\tilde{\varphi}}(s)\right),\xi_s\right)d\hat{m}(\xi)\bigg | ds \nonumber.
\end{eqnarray}
For any $\bar{m} \in \mathcal{M}\left(C_\rho \times C_\rho\right)$ with marginals $(m,\hat{m})$, we have
\begin{eqnarray}
\hspace{-0in}\Lambda_s &=& \bigg | \int_{C_\rho} f\left(s, \vartheta_i(s),\alpha\left(s,\tilde{\varphi}(s)\right),\xi_s\right)dm(\xi)- \int_{C_\rho} f\left(s,\hat{\vartheta}_i(s),\alpha\left(s,\hat{\tilde{\varphi}}(s)\right),\xi_s\right)d\hat{m}(\xi)\bigg|\nonumber\\
&=&\bigg | \int_{C_\rho \times C_\rho} f\left(s,\vartheta_i(s),\alpha\left(s,\tilde{\varphi}(s)\right),\xi_s\right)d\bar{m}(\xi,\hat{\xi})\nonumber\\
&&\hspace{0.3in}-\int_{C_\rho\times C_\rho}f\left(s, \hat{\vartheta}_i(s),\alpha\left(s,\hat{\tilde{\varphi}}(s)\right),\hat{\xi}_s\right)d\bar{m}(\xi,\hat{\xi}) \bigg|\label{eq:prf-mv-sde-2}\\
&\leq& C_1\big(\big| \vartheta_i(s) - \hat{\vartheta}_i(s)\big|\wedge 1\big)+C_2\big(\big\| \tilde{\varphi}_i(s) - \hat{\tilde{\varphi}}_i(s)\big\|_{\mathsf{E}_k}\wedge 1\big)\nonumber\\
&&\hspace{0.3in}+\int_{C_\rho\times C_\rho}C_3\big(\big |\xi_s - \hat{\xi}_s\big| \wedge 1\big)d\bar{m}(\xi,\hat{\xi}) \label{eq:prf-mv-sde-3}
\end{eqnarray}
where (\ref{eq:prf-mv-sde-3}) follows due to the boundedness and the Lipschitz continuity of $f$ and $\alpha$. We note that the essential difference with the completely observed MV SDEs is the existence of the conditional density terms, that is to say the solutions to the nonlinear filtering equations in the Zakai form where the observation process $y(t)$ acts as the exogenous input process, which are going to be handled through the robust representation of the filtering processes.

Recall that for $0 \leq t \leq T$,
\begin{eqnarray}
dy(t)&=&h\bigl(\vartheta_i(t)\bigr)dt+ d\nu(t), \nonumber\\
d\hat{y}(t)&=&h\bigl(\hat{\vartheta_i}(t)\bigr)dt+ d\nu(t), \label{eq:prf-mv-sde-4}
\end{eqnarray}
and hence, the filtering processes $\tilde{\varphi}_i(t)$ and $\hat{\tilde{\varphi}}_i(t)$ are $\mathcal{F}_t^{y}$ and $\mathcal{F}_t^{\hat{y}}$-adapted, respectively. Let $\ell \in C_b(\mathbb{R})$ and for $0\leq t \leq T$ consider the following unnormalized conditional expectations:
\begin{eqnarray}
\tilde{\mathbb{E}}\big[\ell\left(\vartheta_i(t)\right)M_0^t(\alpha)|\mathcal{F}_t^y\big],~\tilde{\mathbb{E}}\big[\ell\big(\hat{\vartheta}_i(t)\big)M_0^t(\alpha)|\mathcal{F}_t^{\hat{y}}\big]\label{eq:prf-mv-sde-5}.
\end{eqnarray}
Let us also define the path valued random variable $y_{\cdot}:\Omega \rightarrow \left([0,t];\mathbb{R}\right)$ such that $y_{\cdot}\left(\omega\right)=\left(y(s,\omega), 0\leq s\leq t\right)$. Hence, by \cite[Theorem 5.12]{bain-crisan}, there exists a function $\eta^{\ell}: C\left([0,t];\mathbb{R}\right)
$ $\rightarrow \mathbb{R}$ such that
\begin{eqnarray}
&&\tilde{\mathbb{E}}\left[\ell\left(\vartheta_i(t)\right)M_0^t(\alpha)|\mathcal{F}_t^y\right]=\eta^{\ell}(y_{\cdot}), \nonumber\\
&&\tilde{\mathbb{E}}\big[\ell\big(\hat{\vartheta}_i(t)\big)M_0^t(\alpha)|\mathcal{F}_t^{\hat{y}}\big]=\eta^{\ell}(\hat{y}_{\cdot}),~\tilde{P}-a.s\label{eq:prf-mv-sde-5-a}.
\end{eqnarray}
Furthermore, the function $\eta^{\ell}$ is locally Lipschitz in the sup-norm and locally bounded \cite[Lemma 5.6]{bain-crisan}. To continue, recall that by \cite[Theorem 4.1]{benes-karatsaz}, 
\begin{eqnarray}
\tilde{\mathbb{E}}\left[\ell\left(\vartheta_i(t)\right)M_0^t(\alpha)|\mathcal{F}_t^y\right]
=\int_{\mathbb{R}}\ell(x)\tilde{\varphi}(x)dx,~\tilde{P}-a.s\label{eq:prf-mv-sde-6}.
\end{eqnarray} 
Take $l=(1+|x|^k)$ for a $k$ that is set in (\ref{eq:norm-space}) so that we have
\begin{eqnarray}
\left | \tilde{\mathbb{E}}\big[\ell\left(\vartheta_i(t)\right)M_0^t(\alpha)|\mathcal{F}_t^y\big]-\tilde{\mathbb{E}}\big[\ell\big(\hat{\vartheta}_i(t)\big)M_0^t(\alpha)|\mathcal{F}_t^{\hat{y}}\big]\right|&=&\left| \int_{\mathbb{R}}\left(1+\left| x\right|^k\right)\left(\tilde{\varphi}(x)-\hat{\tilde{\varphi}}(x)\right)dx\right|\nonumber\\
&=&\| \tilde{\varphi}(x)-\hat{\tilde{\varphi}}(x)\|_{\mathsf{E}_k}\label{eq:prf-mv-sde-7}.
\end{eqnarray}
Notice that $\eta^{(1+|x|^k)}$ is only locally Lipschitz, however, since $y_{\cdot}$ and $\hat{y}_{\cdot}$ take values in $C\left([0,t];\mathbb{R}\right)$, there exists $R' >0$ such that $\|y_{\cdot}\| \leq R'$ for all $\omega \in \Omega$. Hence let $R>R'$ and consequently for any $\|y_{\cdot}\|, \|\hat{y}_{\cdot}\| \leq R$, there exists a constant $C_f^R>0$ such that
\begin{eqnarray}
\| \tilde{\varphi}(x)-\hat{\tilde{\varphi}}(x)\|_{\mathsf{E}_k}&=&\| \eta^{(1+|x|^k)}\left(y_{\cdot}\right)-\eta^{(1+|x|^k)}\left(\hat{y}_{\cdot}\right)\| \nonumber\\
&\leq& C^{R}_{f}\sup_{0\leq s \leq t}\left|y(s) -\hat{y}(s) \right| \leq C^{R}_{f}C_4\sup_{0\leq s \leq t}\left|\vartheta_i(s) -\hat{\vartheta_i}(s) \right|\label{eq:prf-mv-sde-8}
\end{eqnarray}
where $C_4$ is a constant obtained from the Lipschitz continuity of $h$. Substituting (\ref{eq:prf-mv-sde-8}) in (\ref{eq:prf-mv-sde-3}) yields 
\begin{eqnarray}
\Lambda_s \leq \left(C_1+C_2C^{R}_{f}C_4\right)\big(\big| \vartheta_i(s) - \hat{\vartheta}_i(s)\big|\wedge 1\big)+\int_{C_\rho\times C_\rho}C_3\big(\big |\xi_s - \hat{\xi}_s\big| \wedge 1\big)d\bar{m}(\xi,\hat{\xi})\label{eq:prf-mv-sde-9}. 
\end{eqnarray}
Clearly, by (\ref{eq:prf-mv-sde-9}) the proof easily follows from \cite[Theorem 6]{hmc-mfg-main}. We herein provide details for the sake of completeness. Notice first that (\ref{eq:prf-mv-sde-9}) implies
\begin{eqnarray}
\Lambda_s \leq\left(C_1+C_2C^{R}_{f}C_4\right)(| \vartheta_i(s) - \hat{\vartheta}_i(s)|\wedge 1)+C_3D_s(m,\hat{m}), \label{eq:prf-mv-sde-10}
\end{eqnarray}
since $\bar{m}$ is any measure with marginals $m$ and $\hat{m}$. (\ref{eq:prf-mv-sde-3}) and (\ref{eq:prf-mv-sde-10}) together imply
\begin{eqnarray}
&&\hspace{-0.5in}\sup_{0\leq s \leq t}\left |\vartheta_i(s)- \hat{\vartheta}_i(s)\right | \leq \nonumber\\
&&\int_{0}^t\biggl[C_3D_s(m,\hat{m})+\left(C_1+C_2C^{R}_{f}C_4\right)\left(\left| \vartheta_i(s) - \hat{\vartheta}_i(s)\right |\wedge 1\right)\biggr]ds\label{eq:prf-mv-sde-11}.
\end{eqnarray}
Applying Gronwall's lemma yields
\begin{eqnarray}
\sup_{0\leq s \leq t}\left |\vartheta_i(s)- \hat{\vartheta}_i(s)\right | \wedge 1 \leq C_T^R\int_{0}^tC_3D_s(m,\hat{m})ds\label{eq:prf-mv-sde-12}.
\end{eqnarray}
where $C_T^R=\exp\left(\left(C_1+C_2C^{R}_{f}C_4\right)T\right)$. Notice that $\vartheta_i$ and $\hat{\vartheta}_i$ induce two probability distributions, denoted by $\Phi(m)$ and $\Phi(\hat{m})$, respectively, on $C_{\rho}$, Likewise, the joint distribution of $(\vartheta_i,\hat{\vartheta}_i)$ gives a measure $\bar{m}_{\Phi}$ on $C_{\rho} \times C_{\rho}$. Taking expectation of both sides of (\ref{eq:prf-mv-sde-13}) we obtain
\begin{eqnarray}
 D_t\big(\Phi(m),\Phi(\hat{m})\big)\leq C_T^R\int_{0}^tC_3D_s(m,\hat{m})ds\label{eq:prf-mv-sde-13}.
\end{eqnarray}
The proof of the existence and uniqueness is now complete by following the Cauchy argument in \cite{sznitman}. Finally, the claim that $\mu_t \in \mathcal{M}_{[0,T]}$ directly follows from \cite[Lemma 7]{hmc-mfg-main}. 
\end{proof}		
By the analysis in Section \ref{sec:socp-gen}, we have first obtained the optimal control law for the generic agent by assuming that (i) it has only access to partial information on its own state and (ii) the flow of probability measures is fixed. In the second step we proved that the closed loop MV dynamics of the generic agent has a unique solution when the agent uses the Lipschitz control strategy obtained as the solution of HJB equation derived in Section \ref{sec:socp-gen}. Furthermore, it follows that after all agents apply such Lipschitz control strategies, the resulting measure flow maintains a certain continuity. Hence, one can refine its strategy by solving the HJB equation of the partially observed control problem by using this new measure. In the next section we discuss this in more detail.
\subsection{Fixed Point Analysis and the Main Result}\label{sec:fixedpoint}
Given $\mu_t^o \in \mathcal{M}_{[0,T]}$, by the proposition \ref{pro:mortensen}, we obtain a solution for $V(\cdot)$ and subsequently, corresponding to each $(t,p) \in [0,T]\times\mathsf{E}_k$ we get $u(t,p)$ as a well defined function minimizing (\ref{eq:prop-mortensen}). Consequently, we write the optimal control law in the feedback form
\begin{eqnarray}
u=u^\ast\left(t,p\big|\left(\mu^o_t\right)_{0\leq t \leq T}\right), ~ (t, p) \in [0,T]\times\mathsf{E}_k, \label{eq:fixedpoint-1}
\end{eqnarray}
for which we define the well defined map: $\Upsilon:\mathcal{M}_{[0,T]} \rightarrow C_{\rm{Lip}(p)}\left([0,T]\times\mathsf{E}_k;U\right)$ with
\begin{eqnarray}
\Upsilon\left(\left(\mu^o_t\right)_{0\leq t \leq T}\right):=u^\ast\left(t,p\big |\left(\mu^o_t\right)_{0\leq t \leq T}\right)\label{eq:map-meas-cont}.
\end{eqnarray}
Notice that the map characterizes the interaction of individual and the mass which can be considered as the best response map; the individual optimal decision is obtained while the actions of all other agents are fixed. 

We now consider the second component of the fixed point argument. Given a function $\alpha(t,p) \in C_{\rm{Lip}(p)}\left([0,T]\times\mathsf{E}_k;U\right)$ we implement it as a control law in (\ref{eq:mv-appr-1-a}) which leads to a well defined solution $\left(z^o(t),y(t)\right)$, $0\leq t\leq T$. Let us denote the law of the solution of $z^o$ by $m \in \mathcal{M}\left(C_{\rho}\right)$. Then, we define the map $\hat{\Upsilon}: C_{\rm{Lip}(p)}\left([0,T]\times\mathsf{E}_k;U\right) \rightarrow \mathcal{M}\left(C_{\rho}\right)$ by
\begin{eqnarray}
m=\hat{\Upsilon}\left(\alpha\right)\label{eq:map-cont-meas}.
\end{eqnarray}
Note that by Theorem \ref{the:mv-sde-uniq}, $\left(\mu_t\right)_{0\leq t \leq T}$, the law of $z^o(t)$, is in $\mathcal{M}_{[0,T]}$ and hence one can also specify the well defined map $\bar{\Upsilon}:  C_{\rm{Lip}(p)}\left([0,T]\times\mathsf{E}_k;U\right) \rightarrow \mathcal{M}_{[0,T]}$ by
\begin{eqnarray}
\left(\mu_t\right)_{0\leq t \leq T}=\bar{\Upsilon}\left(\alpha\right)\label{eq:map-cont-dist}.
\end{eqnarray}
Therefore, we obtain the following proposition as the partially observed equivalent of \cite[Proposition 8]{hmc-mfg-main}.
\begin{proposition}
Assume (A0)-(A10) and $\left(\mu^o_t\right)_{0\leq t \leq T} \in \mathcal{M}_{[0,T]}$. We have $\bar{\Upsilon}\circ\Upsilon\left(\left(\mu^o_t\right)_{0\leq t \leq T}\right) \in \mathcal{M}_{[0,T]}$, i.e., $\Upsilon_{\mathcal{M}}:=\bar{\Upsilon}\circ\Upsilon:  \mathcal{M}_{[0,T]} \rightarrow  \mathcal{M}_{[0,T]}$.
\end{proposition}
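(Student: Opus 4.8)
The plan is to prove the claim as a straightforward composition-of-well-defined-maps argument: the Proposition asserts only that the image of $\mathcal{M}_{[0,T]}$ under $\bar{\Upsilon}\circ\Upsilon$ remains inside $\mathcal{M}_{[0,T]}$, and all of the substantive analysis has already been carried out in Proposition \ref{pro:mortensen} and Theorem \ref{the:mv-sde-uniq}. Concretely I would verify two inclusions in turn: that $\Upsilon$ sends $\mathcal{M}_{[0,T]}$ into the Lipschitz control class $C_{\rm{Lip}(p)}([0,T]\times\mathsf{E}_k;U)$, and that $\bar{\Upsilon}$ sends that class into $\mathcal{M}_{[0,T]}$.

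First I would fix an arbitrary $(\mu^o_t)_{0\leq t\leq T}\in\mathcal{M}_{[0,T]}$ and trace it through $\Upsilon$. With the measure flow frozen, the reduced data $f^*$ and $L^*$ of (\ref{eq:sta-dyn-indc}) are well defined, so by Proposition \ref{pro:mortensen} together with assumption (A9) the HJB equation (\ref{eq:prop-mortensen}) has a unique solution $V(t,p)\in C^{1,2}_{t,p}([0,T]\times\mathsf{E}_k)$. Inserting this solution into the pointwise minimization (\ref{eq:cont-sep}) yields the separated best response $u^*=u^*(t,p\,|\,(\mu^o_t))$, and assumption (A10) guarantees that $u^*$ is continuous in $(t,p)$ and Lipschitz in $p$. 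Hence $u^*=\Upsilon((\mu^o_t))\in C_{\rm{Lip}(p)}([0,T]\times\mathsf{E}_k;U)$, which is exactly the well-definedness of the map (\ref{eq:map-meas-cont}).

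Next I would feed the Lipschitz control $u^*$ into $\bar{\Upsilon}$. Because $u^*$ obeys (A10), the hypotheses of Theorem \ref{the:mv-sde-uniq} are met with $\alpha=u^*$, so the closed-loop MV system (\ref{eq:mv-appr-1-a})-(\ref{eq:mv-appr-obs-1-a}) admits a unique consistent solution pair $(z^o(t),\mu_t)$, and --- this is the decisive point --- the resulting law satisfies $\mu_t\in\mathcal{M}_{[0,T]}$, the final assertion of that theorem (via \cite[Lemma 7]{hmc-mfg-main}). By the definition (\ref{eq:map-cont-dist}) this law is precisely $\bar{\Upsilon}(u^*)$, so concatenating the two steps gives $\bar{\Upsilon}\circ\Upsilon((\mu^o_t))=(\mu_t)\in\mathcal{M}_{[0,T]}$, as required.

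The argument is thus mostly bookkeeping, and I expect the only genuinely delicate point to be verifying that the output of $\Upsilon$ meets the regularity demanded at the input of $\bar{\Upsilon}$ --- namely that the best response is not merely a measurable minimizer but lands in the Lipschitz class $C_{\rm{Lip}(p)}$. This matching is supplied by (A10), whose Lipschitz-in-$p$ conclusion is precisely what powers the Cauchy/contraction estimate (\ref{eq:prf-mv-sde-13}) underlying Theorem \ref{the:mv-sde-uniq}. Without that Lipschitz property the consistent-solution construction --- and hence the conclusion that the image lands in $\mathcal{M}_{[0,T]}$ rather than merely in $\mathcal{M}(C_\rho)$ --- would fail, so this compatibility check is where I would concentrate the verification.
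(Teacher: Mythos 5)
Your proposal is correct and follows essentially the same route as the paper, which states this proposition without a separate proof precisely because it is the composition of the two well-definedness facts you cite: $\Upsilon$ lands in $C_{\rm{Lip}(p)}\left([0,T]\times\mathsf{E}_k;U\right)$ by Proposition \ref{pro:mortensen} together with (A9)--(A10), and $\bar{\Upsilon}$ lands in $\mathcal{M}_{[0,T]}$ by the final assertion of Theorem \ref{the:mv-sde-uniq} (via Lemma 7 of \cite{hmc-mfg-main}). Your closing observation --- that (A10)'s Lipschitz-in-$p$ regularity is exactly what makes the output of $\Upsilon$ admissible as input to $\bar{\Upsilon}$ --- is indeed the only point of substance, and it matches the paper's reasoning.
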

It is now clear that by the construction of $\Upsilon$ and $\bar{\Upsilon}$ we obtain a solution to NCE system if we can find $\mu_t^o \in \mathcal{M}_{[0,T]}$ that satisfies the fixed point equation
\begin{eqnarray}
\bar{\Upsilon}\circ\Upsilon\left(\left(\mu^o_t\right)_{0\leq t \leq T}\right)=\left(\mu^o_t\right)_{0\leq t \leq T}\label{eq:fixedpoint}.
\end{eqnarray}
As mentioned in \cite{hmc-mfg-main}, there exists several difficulties in demonstrating the existence of a fixed point for (\ref{eq:fixedpoint}); most notably, the sensitivity of the optimal control policies with respect to the measure flow $\left(\mu^o_t\right)_{0\leq t \leq T}$ and hence, such a regularity condition is taken as an assumption in \cite[See (37)]{hmc-mfg-main}. By generalizing the approach presented in \cite{NP-Siam2013}, we derive a similar sensitivity analysis for the partially observed setup. 

Let $0\leq t \leq T$ and $s=T-t$ and consider the best response process given by the solution of HJB equation in (\ref{eq:prop-mortensen}):
\begin{eqnarray}
u^\ast(t,p_t)&=&\arg\min_{a \in U} \left\{\left(\mathcal{J}_{t}^a\mathsf{D}V(s,p_t)(\cdot),p_t\right)+\left(L[\cdot, a, \mu_{t}^o],p_t\right)\right\}\nonumber\\
&=&\arg\min_{a \in U} \left\{a\left(\int\partial_{x}V_p(s,p_t)(x)p_t(x)dx+L[x, a, \mu_{t}^o]p_t(x)dx\right)\right\}\label{eq:cont-sep-fixed}.
\end{eqnarray}
Define the Hamiltonian 
\begin{eqnarray}
H\left(t,p,a,\partial_{x}V_p(s,p)(x)\right):=a\left(\int_{\mathbb{R}}\partial_{x}V_p(s,p)(x)p(x)dx+L[x, a, \mu_t^o]p(x)dx\right)\label{eq:hamiltonian}.
\end{eqnarray}
We also define the closed loop dynamics by
\begin{eqnarray}
dz^o(t)&=&f\left[t,z^o(t),u(t),\mu_t^o
\right]dt+ \sigma dw(t),z^o(0)=z(0),\nonumber\\
dy(t)&=&h\left(z^o(t)\right)dt+ d\nu(t), ~y(0)=0 .\label{eq:cl-loop-dyn-fp}
\end{eqnarray}
We  assume the following. 
\begin{enumerate}
\item[]\hspace{-0.4in}(A11) For each $\mu_t \in \mathcal{M}_{[0,T]}$, the set
\begin{eqnarray}
S(t,p,q):=\argmin_{a \in U}H\left(t,p,a,q\right), \label{eq:as-single}
\end{eqnarray}
is singleton and the resulting $u^\ast$ as a function of $(t,p,q) \in [0,T]\times\mathsf{E}_k \times \mathbb{R}$ is continuous in $t$, Lipschitz continuous in $(p,q)$, uniformly with respect to $t$ and $\mu_t \in \mathcal{M}_{[0,T]}$.
\end{enumerate}
The conditions under which the above assumptions hold in the partially observed case are beyond the scope of this work. However, in the completely observed situation, such conditions can be satisfied under sutiable convexity assumptions in the control variable; the reader is referred to \cite{hmc-mfg-main} for more details.

Following \cite{kol-nonlinearmfg}, we define the G\^ateux derivative of a function $F(t, p,\mu): [0,T]\times \mathsf{E}_k \times \mathcal{P}\left(\mathbb{R}\right)\rightarrow \mathbb{R}$ with respect to the measure $\mu(y)$ as follows :
\begin{eqnarray}
\partial_{\mu(y)}F(t,p,\mu)=\lim_{\epsilon \rightarrow 0}\frac{F\left(t,p,\mu+\epsilon\delta(y)\right)-F(t,p,\mu)}{\epsilon}, \label{eq:def-gateux}
\end{eqnarray} 
where $\delta$ is the Dirac delta function. We assume the following. 
\begin{enumerate}
\item[]\hspace{-0.4in}(A12) The G\^ateux derivatives of $f$ and $L$ with respect to $\mu$ exists are $C^{\infty}(\mathbb{R})$ and uniformly bounded. Let $V(t,p):[0,T]\times \mathsf{E}_k\rightarrow \mathbb{R}$ be the unique solution to the HJB equation in (\ref{eq:prop-mortensen}). Then, $V(t,p)$ is uniformly bounded, its G\^ateaux derivative $\mathsf{D}V_p(t,p)$ (or the Kernel $V_p(t,p)(x)$) exists and is uniformly bounded with respect to all its parameters. 
\end{enumerate}
\begin{theorem}\label{the:sens-cont}
Assume (A0)-(A12) hold and in addition assume that the resulting control law is Lipschitz in $\mu$. Then for given $\left(\mu_t\right)_{0\leq t \leq T}, \left(\tilde{\mu}_t\right)_{0\leq t \leq T} \in \mathcal{M}_{[0,T]}$, there exists a constant $c_1$ such that
\begin{eqnarray}
\sup_{t,p \in [0,T]\times\mathsf{E}_k} |u^\ast(t,p) - \tilde{u}^\ast(t,p)| \leq c_1 \bigl(D_T(\mu, \tilde{\mu})\bigr)\label{eq:theo-sens-cont}.
\end{eqnarray}
\end{theorem}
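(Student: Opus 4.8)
The plan is to exploit the Lipschitz structure of the minimiser furnished by (A11) and thereby reduce (\ref{eq:theo-sens-cont}) to a sensitivity estimate for the adjoint quantity entering the Hamiltonian (\ref{eq:hamiltonian}). Write $V$ and $\tilde V$ for the unique solutions of the HJB equation (\ref{eq:prop-mortensen}) associated with the flows $(\mu_t)$ and $(\tilde\mu_t)$, and let $u^\ast,\tilde u^\ast$ be the feedback laws given by (\ref{eq:cont-sep-fixed}). Recalling that $u^\ast(t,p)=S(t,p,q^\mu)$ with the scalar momentum
\begin{eqnarray}
q^\mu(t,p):=\int_{\mathbb{R}}\partial_x V_p(T-t,p)(x)\,p(x)\,dx, \nonumber
\end{eqnarray}
and that the map $S$ carries $\mu_t$ only through the running cost $L[\cdot,a,\mu_t]$ inside $H$, I would first insert and subtract a mixed term:
\begin{eqnarray}
|u^\ast(t,p)-\tilde u^\ast(t,p)|
&\leq& \bigl|S_{\mu_t}(t,p,q^\mu)-S_{\tilde\mu_t}(t,p,q^\mu)\bigr| \nonumber\\
&&+\,\bigl|S_{\tilde\mu_t}(t,p,q^\mu)-S_{\tilde\mu_t}(t,p,q^{\tilde\mu})\bigr|, \nonumber
\end{eqnarray}
where the subscript records the flow entering $L$. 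The first term is controlled by the standing hypothesis that the control law is Lipschitz in $\mu$, together with the boundedness and Lipschitz dependence of $L$ on its measure argument from (A2) and (A12); since this is an instantaneous dependence through $\mu_t$ alone, it is dominated by $c\,D_T(\mu,\tilde\mu)$. By the uniform Lipschitz continuity of $S(t,p,\cdot)$ asserted in (A11), the second term is bounded by $c\,|q^\mu-q^{\tilde\mu}|$, which by (A7) and the uniform kernel bounds of (A12) is in turn controlled by $\sup_{t,p}\|V_p(T-t,p)-\tilde V_p(T-t,p)\|$ measured in the kernel norm weighted against $\mathsf{E}_k$.

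It therefore remains to bound this kernel difference by $D_T(\mu,\tilde\mu)$, and this is the core of the argument, generalising the finite-dimensional sensitivity analysis of \cite{NP-Siam2013} to the infinite-dimensional density state. Under (A6) the backward operator in (\ref{eq:fwd-bck-op}) reads $\mathcal{J}_{T-\tau}^{\theta}=\tfrac12\partial^2_{xx}+(f^\dagger[\cdot,\mu_{T-\tau}]+\theta)\partial_x$, so that the HJB equation (\ref{eq:prop-mortensen}) depends on the flow both through this averaged drift and through $L[\cdot,\theta,\mu_{T-\tau}]$. I would subtract the two HJB equations and, using the elementary estimate $|\min_\theta A_\theta-\min_\theta B_\theta|\le\sup_\theta|A_\theta-B_\theta|$ for the minimised Hamiltonians (realised through a measurable selection $\theta^\ast$ of the minimiser, which is single-valued by (A11)), obtain for $W:=V-\tilde V$ a relation of the form
\begin{eqnarray}
\partial_\tau W(\tau,p)
&=&\tfrac12\mathsf{D}^2W(\tau,p)\cdot[h p,h p]
+\bigl(\mathcal{J}_{T-\tau}^{\theta^\ast}\mathsf{D}W(\tau,p)(\cdot),p\bigr)
+\mathcal{R}(\tau,p), \nonumber\\
W(0,p)&=&0, \nonumber
\end{eqnarray}
where the forcing term $\mathcal{R}$ collects the differences $f^\dagger[\cdot,\mu_{T-\tau}]-f^\dagger[\cdot,\tilde\mu_{T-\tau}]$ and $L[\cdot,\theta^\ast,\mu_{T-\tau}]-L[\cdot,\theta^\ast,\tilde\mu_{T-\tau}]$ tested against $p$. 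Invoking (A2), (A12) and the Gâteaux-derivative bounds attached to (\ref{eq:def-gateux}), each such difference at time $s$ is estimated by $c\,D_s(\mu,\tilde\mu)$ exactly as in the passage leading to (\ref{eq:prf-mv-sde-10}), so that $\|\mathcal{R}(\tau,\cdot)\|\le c\,D_{T-\tau}(\mu,\tilde\mu)$ uniformly in $p\in\mathsf{E}_k$.

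I would then propagate this to the Fréchet kernel by differentiating the $W$-relation once in $p$ — equivalently by writing the Gâteaux derivative $\partial_{\mu(y)}V$ of (\ref{eq:def-gateux}) as the solution of the linearised HJB equation and integrating along the interpolating flow $\lambda\mu+(1-\lambda)\tilde\mu$, $\lambda\in[0,1]$ — to obtain a linear equation for $W_p=V_p-\tilde V_p$ driven by $\partial_x\mathcal{R}$ and lower-order terms in $W_p$. The uniform bounds on $\mathsf{D}V$, $\mathsf{D}^2V$ and their kernels assumed in (A7) and (A12), together with the boundedness of $h$, $f^\dagger$ and their derivatives from (A3) and (A5), render the coefficients of this equation bounded uniformly in $(\tau,p)$. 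A Gronwall argument run forward in $\tau$ from $W_p(0,\cdot)=0$, analogous to the one producing (\ref{eq:prf-mv-sde-12}), then yields
\begin{eqnarray}
\sup_{t,p}\bigl\|V_p(T-t,p)-\tilde V_p(T-t,p)\bigr\|\le c\int_{0}^{T} D_s(\mu,\tilde\mu)\,ds\le c\,T\,D_T(\mu,\tilde\mu). \nonumber
\end{eqnarray}
Combining this with the two-term decomposition above delivers (\ref{eq:theo-sens-cont}) with $c_1$ depending on $T$, the Lipschitz constants in (A2)--(A5) and (A11), and the uniform bounds in (A7) and (A12).

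The principal obstacle lies in the middle step: establishing well-posedness and uniform estimates for the linearised (Gâteaux) HJB equation on the function space $\mathsf{E}_k$, and in particular transferring a bound on $W=V-\tilde V$ to a bound on its kernel $W_p=V_p-\tilde V_p$, since the equation lives on an infinite-dimensional density state and the minimiser selection $\theta^\ast(\tau,p)$ need not be differentiable in $p$. This is precisely where the regularity postulated in (A9), namely $V\in C^{1,2}_{t,p}\left([0,T]\times\mathsf{E}_k\right)$, and the uniform Gâteaux bounds in (A12) are indispensable; it is also where the robustness of the nonlinear filter exploited in Theorem \ref{the:mv-sde-uniq} must be combined with the value-function sensitivity, in contrast to the finite-dimensional treatment of \cite{NP-Siam2013}.
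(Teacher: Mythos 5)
Your opening decomposition is exactly the paper's first step: splitting $|u^\ast(t,p)-\tilde u^\ast(t,p)|$ into the direct dependence on $\mu_t$ through $L$ (controlled by the standing hypothesis that the control law is Lipschitz in $\mu$) plus a term proportional to the kernel difference $|\partial_x V^\mu_p(t,p)(x)-\partial_x V^{\tilde\mu}_p(t,p)(x)|$; this is (\ref{eq:prf-sens-cont-1}) with constants $k_1,k_2$. The divergence --- and the gap --- lies in how that kernel difference is bounded. The paper never subtracts the two HJB equations: it reads (A12) as postulating that the G\^ateaux derivative $\partial_\mu\partial_x V_p(t,p)(x)$ of the kernel with respect to the measure exists and is uniformly bounded, and then a single application of the mean value theorem along the segment between $\mu$ and $\tilde\mu$ gives $|\partial_x V^\mu_p-\partial_x V^{\tilde\mu}_p|\le k_3\, D_t(\mu,\tilde\mu)$, i.e.\ (\ref{eq:prf-sens-cont-2}); combining yields $c_1=k_1+k_2k_3$ and the proof is over.

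Your middle step --- deriving the kernel sensitivity from a PDE analysis of $W=V-\tilde V$, then differentiating in $p$ and running Gronwall --- is precisely the analysis the paper deliberately avoids, and you have not closed it. Three concrete obstructions: (i) subtracting the two HJB equations does not produce an equation for $W$ with a single selection $\theta^\ast$; the elementary bound $|\min_\theta A_\theta-\min_\theta B_\theta|\le\sup_\theta|A_\theta-B_\theta|$ yields only a differential inequality, and extracting an estimate from it needs a comparison principle for the infinite-dimensional PDE (\ref{eq:prop-mortensen}), which is not available; (ii) a bound on $W$ does not bound its Fr\'{e}chet kernel $W_p$, and differentiating the subtracted relation in $p$ generates terms involving the $p$-derivative of the minimizer selection, for which (A11) provides only Lipschitz continuity, not differentiability --- you acknowledge this yourself; (iii) well-posedness and uniform estimates for the linearised (G\^ateaux) HJB equation on $\mathsf{E}_k$ are exactly what Remark \ref{rem:sens-1} identifies as not well understood in the literature, which is why the paper takes the boundedness of $\partial_\mu\partial_x V_p$ as an assumption rather than proving it. You flag this cluster of issues as ``the principal obstacle'' but leave it open, so the proposal as written does not prove the theorem; it reduces it to an unproved PDE estimate that the paper itself declares out of reach. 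The repair is short: invoke (A12) in the form the paper intends --- uniform boundedness of the measure-G\^ateaux derivative of the kernel $\partial_x V_p$ --- and conclude directly by the mean value theorem, as in (\ref{eq:prf-sens-cont-2}).
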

\begin{proof}
Note that Assumption (A12) and the fact that resulting $u^\ast$ is Lipschitz continuous in $\mu$ yields
\begin{eqnarray}
|u^\ast(t,p) - \tilde{u}^\ast(t,p)| &\leq& k_1 D_t\bigl(\mu, \tilde{\mu}\bigr)+k_2 |\partial_{x}V^\mu_p(t,p)(x) - \partial_{x}V^{\tilde{\mu}}_p(t,p)(x)|, \label{eq:prf-sens-cont-1}
\end{eqnarray}
with positive constants $k_1$ and $k_2$ where $V^\mu_p(t,p)(x)$ and $V^{\hat{\mu}}_p(t,p)(x)$ are the kernels defined in (\ref{eq:frechet-kernels}) corresponding to measures $\mu$ and $\hat{\mu}$.

The goal is to use the existence of G\^ateux derivative of the Kernel $\partial_xV_p(t,p)(x)$ with respect to the measure $\mu$ which satisfy the boundedness assumption so that one can invoke the mean value theorem (MVT). Indeed, by the assumption that $\partial_{\mu}\partial_xV_p(t,p)(x)$ is uniformly bounded, by MVT we obtain
\begin{eqnarray}
|\partial_{x}V^\mu_p(t,p)(x) - \partial_{x}V^{\tilde{\mu}}_p(t,p)(x)|\leq k_3D_t\bigl(\mu, \tilde{\mu}\bigr)\label{eq:prf-sens-cont-2}.
\end{eqnarray} 
Consequently, we obtain
\begin{eqnarray}
|u^\ast(t,p) - \tilde{u}^\ast(t,p)| &\leq& (k_1+ k_2k_3) D_t\bigl(\mu, \tilde{\mu}\bigr)|\label{eq:prf-sens-cont-3},
\end{eqnarray}
which completes the proof with $c_1=k_1+ k_2k_3$.
\end{proof}
\begin{remark}\label{rem:sens-1}
In Theorem \ref{the:sens-cont} above, we assume the uniform boundedness of the G\^ateux derivative of the function that satisfies the HJB equation obtained for the partially observed setup. We note that one could follow a similar approach to \cite[Section 6]{kol-nonlinearmfg} in order to analyze the boundedness property of the solution of the HJB equation (\ref{eq:prop-mortensen}) (see also \cite[Appendix F]{NP-Siam2013} for the finite dimensional case) and its Fr\'{e}chet derivatives by analyzing associated kernels. However, such an analysis would considerably depend upon the analysis of PDE in the form (\ref{eq:prop-mortensen}) which is not well understood in the literature.
\end{remark}
We now provide a sensitivity result for the distance between two measures with respect to the control policies by combining the general developed in \cite{hmc-mfg-main}, \cite{NP-Siam2013} and \cite{sen-caines-po-sicon}. 
\begin{lemma}\label{lem:sens-meas-con}
Under (A0)-(A11) there exists a constant $c_2$ such that 
\begin{eqnarray}
D_T\left(m,\hat{m}\right)\leq c_2 \sup_{t, p \in [0,T]\times \mathsf{E}_k}\left| u^\ast(t,p)-\hat{u}^\ast(t,p)\right|, \label{eq:lem-sens-meas-cont}
\end{eqnarray}
where $m,\hat{m} \in \mathcal{M}(C_{\rho})$ are induced by (\ref{eq:map-cont-meas}) using $u^\ast, \hat{u}^\ast \in C_{\rm{Lip}(p)}\left([0,T]\times\mathsf{E}_k;U\right)$.
\end{lemma}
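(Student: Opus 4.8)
The plan is to run the same coupling-plus-Gronwall scheme used in the proof of Theorem~\ref{the:mv-sde-uniq}, but now treating the two Lipschitz control laws $u^\ast,\hat u^\ast$ as the inputs and letting the induced consistent measure flows be fixed by consistency. I would drive the two closed-loop systems with the common Brownian motions $(w,\nu)$ and the common initial condition $z(0)$: let $z^o$ be the consistent solution of (\ref{eq:mv-appr-1-a}) under $u^\ast$, with path-law $m$ and marginal flow $\mu_t$, and $\hat z^o$ the one under $\hat u^\ast$, with path-law $\hat m$ and marginal flow $\hat\mu_t$ (both existing and unique by Theorem~\ref{the:mv-sde-uniq}). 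Using (A6), $f=f^\dagger+u$, the integrand driving $z^o-\hat z^o$ splits as
$$
f[s,z^o,u^\ast(s,\tilde\varphi),\mu_s]-f[s,\hat z^o,\hat u^\ast(s,\hat{\tilde\varphi}),\hat\mu_s]=\bigl(f^\dagger[s,z^o,\mu_s]-f^\dagger[s,\hat z^o,\hat\mu_s]\bigr)+\bigl(u^\ast(s,\tilde\varphi)-\hat u^\ast(s,\hat{\tilde\varphi})\bigr),
$$
and I would estimate the two brackets separately.

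For the first bracket I insert and subtract $f^\dagger[s,\hat z^o,\mu_s]$: the $x$-Lipschitz property of $f^\dagger$ (A2, A3) controls the state gap by $C_1|z^o-\hat z^o|$, while the measure gap $|\int f^\dagger(s,\hat z^o,y)(\mu_s-\hat\mu_s)(dy)|$ is bounded through the boundedness and $y$-Lipschitz continuity of $f^\dagger$, using $|f^\dagger(s,x,y)-f^\dagger(s,x,y')|\le C(|y-y'|\wedge 1)$ against any coupling of $(\mu_s,\hat\mu_s)$, by a constant multiple of $D_s(\mu,\hat\mu)\le D_s(m,\hat m)$ (here consistency is essential, since $\mu_s,\hat\mu_s$ are the time-$s$ marginals of $m,\hat m$). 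For the second bracket I insert and subtract $\hat u^\ast(s,\tilde\varphi)$, obtaining the forcing term $|u^\ast(s,\tilde\varphi)-\hat u^\ast(s,\tilde\varphi)|\le\sup_{t,p}|u^\ast(t,p)-\hat u^\ast(t,p)|$, which is exactly the right-hand side of (\ref{eq:lem-sens-meas-cont}), plus the Lipschitz-in-$p$ remainder $C_2\|\tilde\varphi(s)-\hat{\tilde\varphi}(s)\|_{\mathsf E_k}$.

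The infinite-dimensional filter gap $\|\tilde\varphi(s)-\hat{\tilde\varphi}(s)\|_{\mathsf E_k}$ is handled exactly by the robustness argument already carried out in (\ref{eq:prf-mv-sde-5-a})--(\ref{eq:prf-mv-sde-8}): since $y,\hat y$ share the noise $\nu$ and differ only through $h$, the time-invariance of the observation dynamics (A11) together with the Lipschitz continuity of $h$ gives $\sup_{r\le s}|y(r)-\hat y(r)|\le C_4\int_0^s|z^o-\hat z^o|\,dr$, and the local Lipschitz property of the robust representation $\eta^{(1+|x|^k)}$ from \cite[Lemma 5.6]{bain-crisan} yields $\|\tilde\varphi(s)-\hat{\tilde\varphi}(s)\|_{\mathsf E_k}\le C_f^R C_4\sup_{r\le s}|z^o(r)-\hat z^o(r)|$. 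Collecting the estimates, taking the supremum over $[0,t]$, the truncation $\wedge 1$, and expectations, and writing $g(t):=\mathbb E\sup_{s\le t}(|z^o(s)-\hat z^o(s)|\wedge 1)$, I arrive at
$$
g(t)\le (C_1+C_2C_f^R C_4)\int_0^t g(s)\,ds+C_3\int_0^t D_s(m,\hat m)\,ds+C\,T\sup_{t,p}|u^\ast(t,p)-\hat u^\ast(t,p)|.
$$

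Finally I would close the loop. Because $(z^o,\hat z^o)$ are built on common noise, their joint path-law is an admissible coupling of $(m,\hat m)$, so by Definition~\ref{def:was-metric} we have $D_s(m,\hat m)\le g(s)$; substituting this folds the middle term into the linear term, and Gronwall's lemma gives $g(T)\le c_2\sup_{t,p}|u^\ast-\hat u^\ast|$, whence $D_T(m,\hat m)\le g(T)$ delivers (\ref{eq:lem-sens-meas-cont}). I expect the main obstacle to be precisely this self-referential appearance of $D_s(m,\hat m)$ on the right-hand side: the point to get right is that it is dominated by the very functional $g$ being estimated, so that it can be absorbed into the Gronwall exponent while the control gap remains the single inhomogeneous forcing term. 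The only other delicate point, converting the density discrepancy into a state discrepancy via filter robustness, is inherited verbatim from Theorem~\ref{the:mv-sde-uniq} and requires no new argument.
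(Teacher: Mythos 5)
Your proof is correct and follows essentially the same route as the paper's: coupling the two closed-loop systems on common noise and initial condition, decomposing the drift gap into state, filter, measure, and control contributions, invoking the filter-robustness bound $\| \tilde{\varphi}(s)-\hat{\tilde{\varphi}}(s)\|_{\mathsf{E}_k}\leq C_f^R C_4 \sup_{r\leq s}|z^o(r)-\hat{z}^o(r)|$ from the proof of Theorem \ref{the:mv-sde-uniq}, and closing with Gronwall. The only cosmetic differences are that you use (A6) to split off the control additively where the paper uses Lipschitz continuity of $f$ in $u$ with insertion/subtraction, and that you absorb the self-referential term $D_s(m,\hat{m})$ into the Gronwall exponent via $D_s(m,\hat{m})\leq g(s)$ in a single pass, whereas the paper keeps it as a forcing term and applies Gronwall twice (once pathwise to the state gap, once to $D_t(m,\hat{m})$ after taking expectations); both yield (\ref{eq:lem-sens-meas-cont}).
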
 
\begin{proof}
Denote the two solutions corresponding to $u^\ast$ and $\tilde{u}^\ast$ by $z^o(t)$ and $\hat{z}^o(t)$. Hence,
\begin{eqnarray}
&&z^o(t)=\int_{0}^t\int_{C_\rho}f\left(s,z^o(s),u^\ast\left(s,\tilde{\varphi}(s)\right), \xi_s\right)dm(\xi)ds+z(0)+\int_{0}^t\sigma dw(s), \nonumber\\
&&\hat{z}^o(t)=\int_{0}^t\int_{C_\rho}f\left(s,\hat{z}^o(s),\hat{u}^\ast\left(s,\hat{\tilde{\varphi}}(s)\right), \hat{\xi}_s\right)d\hat{m}(\hat{\xi})ds + z(0)+\int_{0}^t\sigma dw(s)\label{eq:prf-lem-sensmeascont1}.
\end{eqnarray}
By the Lipschitz continuity of $f$, $u^\ast$ and $\hat{u}^\ast$ we obtain
\begin{eqnarray}
&&\left| f\left(s, z^o(s),u^\ast\left(s,\tilde{\varphi}(s)\right), \xi_s\right) - f\bigl(s,\hat{z}^o(s),\hat{u}^\ast\bigl(s,\hat{\tilde{\varphi}}(s)\bigr), \hat{\xi}_s\bigr)\right|\nonumber\\
&\leq&\big| f\left(s,z^o(s),u^\ast\left(s,\tilde{\varphi}(s)\right), \xi_s\right) - f\bigl(s,\hat{z}^o(s),u^\ast\bigl(s,\hat{\tilde{\varphi}}(s)\bigr), \hat{\xi}_s\bigr)\big|\nonumber\\
&&\hspace{0.1in}+\big| f\big(s,\hat{z}^o(s),u^\ast\big(s,\hat{\tilde{\varphi}}(s)\big), \hat{\xi}_s\big) -f\bigl(s,\hat{z}^o(s),\hat{u}^\ast\bigl(s,\hat{\tilde{\varphi}}(s)\bigr), \hat{\xi}_s\bigr)\big|\nonumber\\
&\leq & C_1\left(\left|z^o(s)-\hat{z}^o(s)\right|\wedge 1\right)+C_2\bigl(\| \tilde{\varphi}(s)-\hat{\tilde{\varphi}}(s)\|_{\mathsf{E}_k}\wedge 1\bigr)\nonumber\\
&&\hspace{0.1in}+C_3\left(\left|\xi_s-\hat{\xi}_s\right|\wedge 1\right)+C_5\sup_{(s,p)\in [0,T]\times\mathsf{E}_k}\left|u^\ast(s,p)-\hat{u}^\ast(s,p)\right|.
\label{eq:prf-lem-sensmeascont2}
\end{eqnarray}
Following similar steps in the proof of Theorem \ref{the:mv-sde-uniq} we get 
\begin{eqnarray}
\left|z^o(t)-\hat{z}^o(t)\right|&\leq &\int_{0}^t C_1\left(\left|z^o(s)-\hat{z}^o(s)\right|\wedge 1\right)ds+\int_{0}^t C_2\bigl(\| \tilde{\varphi}(s)-\hat{\tilde{\varphi}}(s)\|_{\mathsf{E}_k}\wedge 1\bigr)ds\nonumber\\
&&+C_3\int_{0}^tD_s(m,\hat{m})ds+C_5t\sup_{(s,p)\in [0,T]\times\mathsf{E}_k}\left|u^\ast(s,p)-\hat{u}^\ast(s,p)\right|.\label{eq:prf-lem-sensmeascont3}
\end{eqnarray}
Now by use of the robust representation of nonlinear filter demonstrated in (\ref{eq:prf-mv-sde-4})-(\ref{eq:prf-mv-sde-9}), we obtain
\begin{eqnarray}
&&\hspace{-0.2in}\left|z^o(t)-\hat{z}^o(t)\right|\nonumber\\
&&\leq \int_{0}^t C_1\left(\left|z^o(s)-\hat{z}^o(s)\right|\wedge 1\right)+\left(C_1+C_2C^{R}_{f}C_4\right)\sup_{0\leq s \leq t}\bigl(|z^o(s)-\hat{z}^o(s)|\wedge 1\bigr)ds\nonumber\\
&&+C_3\int_{0}^tD_s(m,\hat{m})ds+C_5t\sup_{(s,p)\in [0,T]\times\mathsf{E}_k}\left|u^\ast(s,p)-\hat{u}^\ast(s,p)\right|\label{eq:prf-lem-sensmeascont4}
\end{eqnarray}
where $C_f^R$ and $C_4$ are defined in (\ref{eq:prf-mv-sde-8}). Applying Gronwall's lemma to (\ref{eq:prf-lem-sensmeascont4}) gives
\begin{eqnarray}
&&\sup_{0\leq s \leq t}\left|z^o(t)-\hat{z}^o(t)\right| \wedge 1\leq \nonumber\\
&&\exp\left(\left(C_1+C_2C^{R}_{f}C_4\right)T\right)\bigg(C_3\int_{0}^t\hspace{-0.02in}D_s(m,\hat{m})ds+C_5t\sup_{(s,p)\in [0,T]\times\mathsf{E}_k}\hspace{-0.04in}\left|u^\ast(s,p)-\hat{u}^\ast(s,p)\right|\bigg)\nonumber.
\end{eqnarray}
Consequently, 
\begin{eqnarray}
&&\hspace{0.3in}D_t(m,\hat{m})\leq \label{eq:prf-lem-sensmeascont5}\\
&&\exp\left(\left(C_1+C_2C^{R}_{f}C_4\right)T\right)\bigg(C_3\int_{0}^tD_s(m,\hat{m})ds+C_5t\sup_{(s,p)\in [0,T]\times\mathsf{E}_k}\left|u^\ast(s,p)-\hat{u}^\ast(s,p)\right|\bigg)\nonumber.
\end{eqnarray}
Applying Gronwall's lemma to (\ref{eq:prf-lem-sensmeascont5}) we complete the proof.
\end{proof}
We can now present the main result of PO MFG theory. 
\begin{theorem}[\textbf{Main Result}]\label{theo:main-fixed-point}
Assume (A0)-(A12) hold and consider the processes $V(t,p)$, $u^\ast(t)$, $z^o(t)$ and $y(t)$, which are defined in (\ref{eq:prop-mortensen}), (\ref{eq:cont-sep-fixed}) and (\ref{eq:cl-loop-dyn-fp}), respectively. If the constants $(c_1,c_2)$ for (\ref{eq:theo-sens-cont}) and (\ref{eq:lem-sens-meas-cont}) satisfy the gain condition that $c_1c_2<1$ there exists a unique solution to (\ref{eq:fixedpoint}) and hence a unique solution to the MFG system given by (\ref{eq:prop-mortensen}), (\ref{eq:cont-sep-fixed}) and (\ref{eq:cl-loop-dyn-fp}). 
\end{theorem}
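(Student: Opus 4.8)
The plan is to exploit the gain condition $c_1 c_2 < 1$ to show that the composed best-response map $\Upsilon_{\mathcal{M}} = \bar{\Upsilon}\circ\Upsilon$ is a contraction, and then to invoke the Banach fixed point theorem; its unique fixed point is exactly the sought solution $(\mu^o_t)$ of (\ref{eq:fixedpoint}). I would run the entire argument in the Polish space $(\mathcal{M}(C_\rho), D_T)$ of Definition \ref{def:was-metric}, identifying each admissible measure flow with the path law of the associated closed-loop McKean--Vlasov process so that its time-$t$ marginal is $\mu_t$; completeness of $(\mathcal{M}(C_\rho), D_T)$ was noted right after that definition.

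First I would assemble the two one-sided sensitivity estimates already established. Given inputs $(\mu_t),(\tilde{\mu}_t)\in\mathcal{M}_{[0,T]}$, let $u^\ast=\Upsilon((\mu_t))$ and $\tilde{u}^\ast=\Upsilon((\tilde{\mu}_t))$ be the best-response laws produced by the HJB equation (\ref{eq:prop-mortensen}). Theorem \ref{the:sens-cont} yields
\begin{eqnarray}
\sup_{(t,p)\in[0,T]\times\mathsf{E}_k}|u^\ast(t,p)-\tilde{u}^\ast(t,p)| \leq c_1 D_T(\mu,\tilde{\mu}). \nonumber
\end{eqnarray}
Implementing these controls in the closed-loop dynamics (\ref{eq:cl-loop-dyn-fp}) produces, through (\ref{eq:map-cont-meas}), the path laws $m=\hat{\Upsilon}(u^\ast)$ and $\tilde{m}=\hat{\Upsilon}(\tilde{u}^\ast)$, whose marginal flows are $\bar{\Upsilon}(u^\ast)$ and $\bar{\Upsilon}(\tilde{u}^\ast)$; Lemma \ref{lem:sens-meas-con} then yields
\begin{eqnarray}
D_T(m,\tilde{m}) \leq c_2 \sup_{(t,p)\in[0,T]\times\mathsf{E}_k}|u^\ast(t,p)-\tilde{u}^\ast(t,p)|. \nonumber
\end{eqnarray}

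Chaining these gives
\begin{eqnarray}
D_T\bigl(\Upsilon_{\mathcal{M}}(\mu),\Upsilon_{\mathcal{M}}(\tilde{\mu})\bigr) \leq c_2 c_1 D_T(\mu,\tilde{\mu}) = (c_1 c_2)\,D_T(\mu,\tilde{\mu}), \nonumber
\end{eqnarray}
so $\Upsilon_{\mathcal{M}}$ is Lipschitz with modulus $c_1 c_2 < 1$. To apply the contraction mapping theorem I still need a complete set on which $\Upsilon_{\mathcal{M}}$ is a self-map: the preceding proposition already gives $\Upsilon_{\mathcal{M}}:\mathcal{M}_{[0,T]}\to\mathcal{M}_{[0,T]}$, and I would verify that $\mathcal{M}_{[0,T]}$ is closed, hence complete, in $(\mathcal{M}(C_\rho), D_T)$, since the defining H\"older inequality (\ref{eq:holder}) for fixed $\beta$ and $B$ is preserved under $D_T$-convergence. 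Banach's theorem then yields a unique $(\mu^o_t)\in\mathcal{M}_{[0,T]}$ solving (\ref{eq:fixedpoint}); feeding it through $\Upsilon$ (uniqueness of $V$ under (A9), and of the minimizer $u^\ast$ under the singleton-argmin assumption) and through $\bar{\Upsilon}$ (uniqueness of $(z^o,\mu_t)$ by Theorem \ref{the:mv-sde-uniq}) propagates uniqueness to the full MFG system (\ref{eq:prop-mortensen}), (\ref{eq:cont-sep-fixed}), (\ref{eq:cl-loop-dyn-fp}).

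The hard part is reconciling the two metrics so that the composition closes in a single fixed metric. Theorem \ref{the:sens-cont} controls the control-law perturbation by the distance $D_T(\mu,\tilde{\mu})$ between the \emph{input measure flows}, whereas Lemma \ref{lem:sens-meas-con} returns a distance $D_T(m,\tilde{m})$ between the \emph{output path laws} in $\mathcal{M}(C_\rho)$. I must check that the flow distance entering Theorem \ref{the:sens-cont} is dominated by the path-law $D_T$ (which holds because the sup-over-time coupling functional defining $D_T$ dominates each single-time marginal coupling), so that measuring everything in the path-law $D_T$ makes $\Upsilon_{\mathcal{M}}$ a bona fide self-contraction whose fixed point has the desired marginal flow in $\mathcal{M}_{[0,T]}$. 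A secondary technical point is that the constants $c_1, c_2$ in the two results are uniform over the flows involved, so their product is a genuine uniform contraction constant, and that the closedness of $\mathcal{M}_{[0,T]}$ indeed survives passage to $D_T$-limits.
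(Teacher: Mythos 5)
Your proposal is correct and takes essentially the same route as the paper: the paper's proof is a single sentence invoking the Banach fixed point theorem on the Polish space $\mathcal{M}_{[0,T]}$, with the contraction property supplied by the gain condition $c_1c_2<1$ exactly as you obtain it by chaining Theorem \ref{the:sens-cont} and Lemma \ref{lem:sens-meas-con}. The points you flag as delicate (reconciling the measure-flow distance with the path-law distance $D_T$, and completeness of $\mathcal{M}_{[0,T]}$ under $D_T$-limits) are left entirely implicit in the paper, so your treatment is, if anything, more careful than the published one.
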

\begin{proof}
The proof follows from the Banach fixed point theorem for the map $\bar{\Gamma}\circ\Gamma$ defined on the Polish space $\mathcal{M}_{[0,T]}$ since the gain condition assures that the map is a contraction. 
\end{proof}
\section{$\epsilon$-Nash Equilibrium Property of the MFG Control Laws}\label{sec:e-nash}
Following the common methodology employed in the MFG literature, we shall investigate the performance of the best response control processes obtained in the previous section in a finite population setting. Consider the following dynamics described in (\ref{eq:sta-dyn}):
\begin{eqnarray}
dz_{i}^N(t)&=&\frac{1}{N}\sum_{j=1}^Nf\left(t,z_{i}^N(t),u_{i}^N(t),z_{j}^N(t)\right)dt+\sigma dw_{i}(t), \label{eq:enash-sta-dyn}\\
dy_{i}^N(t)&=&h\left(z_{i}^N(t)\right)dt+ d\nu_{i}(t), \label{eq:enash-obs-dyn}
\end{eqnarray}
with $z_i^N(0)=z_i(0)$, $y_i(0)=0$, $1\leq i \leq N$. Here $\left(w_i(t),\nu_i(t), 1\leq i \leq N\right)$ are independent Brownian motions in $\mathbb{R}$. Similarly, define the following set of MV equations
\begin{eqnarray}
dz_{i}^o(t)&=&f\left[t,z_{i}^o(t),u_{i}^o(t),\mu_t\right]dt+\sigma dw_{i}(t), \label{eq:enash-mv-sta-dyn}\\
dy_{i}^o(t)&=&h\left(z_{i}^o(t)\right)dt+ d\nu_{i}(t), \label{eq:enash-mv-obs-dyn}
\end{eqnarray}
with the same initial conditions where $\mu_t$ is the law of $z_i^o(t)$. Recall that a unique solution exists to (\ref{eq:enash-sta-dyn}) when $u_i^N \in C_{\rm{Lip}{(x)}}\left([0,T]\times \mathbb{R};U\right)$, and a unique consistent solution to (\ref{eq:enash-mv-sta-dyn}) exists when $u_i^o \in C_{\rm{Lip}(p)}\left([0,T]\times \mathsf{E}_k;U\right)$. Notice that in contrast to the coupled process in (\ref{eq:enash-sta-dyn})-(\ref{eq:enash-obs-dyn}), the system in (\ref{eq:enash-mv-sta-dyn})-(\ref{eq:enash-mv-obs-dyn}) gives $N$ decoupled pairs of processes. Let $z(0):=\int_{\mathbb{R}}xdF(x)$ be the mean value of the initial states and define 
\begin{eqnarray}
\epsilon_N:=\left| \int_{\mathbb{R}}x^2dF_N(x)-2z(0)\int_{\mathbb{R}}xdF_N(x)+z(0)^2\right|, \label{eq:def-epsilon}
\end{eqnarray}
where $\lim_{n\rightarrow \infty}\epsilon_N=0$. Consider the following $\sigma$-algebras:
\begin{eqnarray}
\mathcal{F}_t^{(y^o)}&:=&\sigma\big\{y_1^o(s),\ldots, y_N^o(s);0\leq s \leq t\big\},\nonumber\\
 \mathcal{F}_t^{y_i^o}&:=&\sigma\big\{y_i^o(s);0\leq s \leq t\big\},\nonumber\\
 \mathcal{F}_t^{(y^N)}&:=&\sigma\big\{y_1^N(s),\ldots, y_N^N(s);0\leq s \leq t\big\},\nonumber\\
 \mathcal{F}_t^{y_i^N}&:=&\sigma\big\{y_i^N(s);0\leq s \leq t\big\}\label{eq:sigma-algebras}.
\end{eqnarray}
We now define class of admissible control policies. Let 
\begin{eqnarray}
&&\mathcal{U}^o_i:=\bigg\{u(\cdot):u(t)~\mbox{is adapted to}~\mathcal{F}_t^{(y^o)},\nonumber\\
&&~u\in C_{\rm{Lip}(p^N)}\left([0,T]\times \mathsf{E}_k^N;U\right)~\mbox{and}~\mathbb{E}\int_{0}^T|u(t)|^2dt<\infty\bigg\},\nonumber\\
&&\mathcal{U}^{o,d}_i:=\bigg\{u(\cdot):u(t)~\mbox{is adapted to}~\mathcal{F}_t^{y_i^o},\nonumber\\
&&u \in C_{\rm{Lip}(p)}\left([0,T]\times \mathsf{E}_k;U\right)~\mbox{and}~\mathbb{E}\int_{0}^T|u(t)|^2dt<\infty\bigg\},\nonumber\\
&&\mathcal{U}^N_i:=\bigg\{u(\cdot):u(t)~\mbox{is adapted to}~\mathcal{F}_t^{(y^N)},\nonumber\\
&&u \in C_{\rm{Lip}(p^N)}\left([0,T]\times \mathsf{E}^N_k;U\right)~\mbox{and}~\mathbb{E}\int_{0}^T|u(t)|^2dt<\infty\bigg\},\nonumber\\
&&\mathcal{U}^{N,d}_i:=\bigg\{u(\cdot):u(t)~\mbox{is adapted to}~\mathcal{F}_t^{y_i^N},\nonumber\\
&&u \in C_{\rm{Lip}(p)}\left([0,T]\times \mathsf{E}_k;U\right)~\mbox{and}~\mathbb{E}\int_{0}^T|u(t)|^2dt<\infty\bigg\},\label{eq:admissible-controls}
\end{eqnarray}
where $1\leq i \leq N$ and $C_{\rm{Lip}(p^N)}\left([0,T]\times \mathsf{E}_k^N;U\right)$ denote the space of $U$-valued continuous functions on $[0,T]\times \mathsf{E}_k^N$ with Lipschitz coefficients in $\mathsf{E}_k^N:=\otimes_{j=1}^N\mathsf{E}_{k,j}$, where for $1\leq j \leq N$, $\mathsf{E}_{k,j}$ is a copy of $E_k$ 

In the above admissible control policies, $\mathcal{U}^o_i$ represents centralized information on the partially observed states at the infinite population game whereas $\mathcal{U}^N_i$ represents centralized information on the partially observed states at the finite population game. On the other hand $\mathcal{U}^{o,d}_i$ represents decentralized control policies at infinite population game whereas $\mathcal{U}^{N,d}_i$ represents decentralized control policies at the finite population. 

For the dynamic game problem specified in (\ref{eq:enash-sta-dyn}), recall that the cost function for the $i$th agent is given as
\begin{eqnarray}
J_{i}^N(u_{i}^N,u_{-i}^N)=\mathbb{E}\int_{0}^T \frac{1}{N}\sum_{j=1}^NL\left(z_{i}^N(t),u_{i}^N(t),z_{j}^N(t)\right)dt\label{eq:enash-cost}.
\end{eqnarray}
Recall also the generic agent's PO SOCP:

\textit{Generic Agent's SOCP: For $0\leq t \leq T$}
\begin{eqnarray}
dz^o(t)&=&f\left[t,z^o(t),u(t),\mu_t
\right]dt+ \sigma dw(t),~z^o(0)=z(0), \nonumber\\
dy^o(t)&=&h\left(z^o(t)\right)dt+ d\nu(t), ~y(0)=0, \nonumber\\
J(u)&=&\mathbb{E}\int_{0}^TL[z^o(t), u(t), \mu_t]dt\label{eq:enash-socp}
\end{eqnarray}
where $J(u)$ is to be minimized over $\mathcal{U}:=\bigl\{u(\cdot) \in U: u(t)~\text{is}~\mathcal{F}_t^{y^o}-\text{adapted}~\text{and}$ $~\mathbb{E}\int_{0}^T|u(t)|^2dt<\infty\bigr\}$. 

The optimal control law for the above PO SOCP (and hence the best response control process for the MFG game) is characterized by (\ref{eq:cont-sep-fixed}) which we denote by $u^o(t)$. Recall that under (A6)-(A10), $u_i^o \in C_{\rm{Lip}(p)}\left([0,T]\times \mathsf{E}_k;U\right) \subset \mathcal{U}_i^{o,d}$.  
\begin{definition}{\cite{hmc-mfg-main}}\label{def:enash}
Given $\epsilon$, the set of admissible control laws $(u_1^o,\ldots, u_N^o)$ generates $\epsilon$-Nash equilibrium with respect to the cost $J_i^N$ if for any $1\leq i \leq N$
\begin{eqnarray}
J_i^N\left(u_i^o;u_{-i}^o\right)-\epsilon \leq \inf_{u_i \in \mathcal{U}_i^{N}}J_i^N\left(u_i;u_{-i}^o\right)\leq J_i^N\left(u_i^o;u_{-i}^o\right)\label{eq:enash}.
\end{eqnarray}
\end{definition}
We now show that the MFG best responses for a finite $N$ population system (\ref{eq:enash-sta-dyn})-(\ref{eq:enash-mv-obs-dyn}) is an $\epsilon$-Nash equilibrium with respect to the cost function defined in (\ref{eq:enash-cost}). 
\begin{theorem}\label{the:main-enash}
Assume (A0)-(A12) hold and there exists a unique solution to MFG system such that the best response control process $u^o(t,p)$ is continuous in $(t,p)$ and Lipschitz continuous in $p$. Then $\left(u_1^o,u_2^o\ldots, u_N^o\right)$ where $u_i^o=u^o$, $1\leq i \leq N$, generates an $O\left(\epsilon_N+1/\sqrt{N}\right)$-Nash equilibrium with respect to the cost function (\ref{eq:enash-cost}) such that $\lim_{N\rightarrow \infty}\epsilon_N=0$.
\end{theorem}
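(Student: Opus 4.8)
The plan is to follow the standard three-step methodology of mean field game theory (cf.\ \cite{hmc-mfg-main}), adapted to the partial-observation structure by invoking the filter-robustness estimates already established in the proofs of Theorems \ref{the:mv-appr-partial} and \ref{the:mv-sde-uniq}. The right-hand inequality in Definition \ref{def:enash} is immediate because its infimum is taken over a larger class, so the entire content lies in the left-hand inequality, for which I would establish a lower bound on the cost incurred by an arbitrary unilateral deviation of agent $i$.

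First I would establish the baseline estimate. When every agent applies $u^o$, Theorem \ref{the:mv-appr-partial} gives $\sup_{1\leq j\leq N}\sup_{0\leq t\leq T}\mathbb{E}|z_j^N(t)-\hat z_j(t)|=O(1/\sqrt{N})$, where the $\hat z_j$ are the i.i.d.\ McKean--Vlasov copies of (\ref{eq:mv-appr-min-2}). Writing $J_i^N(u^o;u_{-i}^o)=\mathbb{E}\int_0^T\frac1N\sum_{j=1}^N L(z_i^N,u^o,z_j^N)\,dt$ and using the Lipschitz continuity and boundedness of $L$ from (A2), I would replace each $z_j^N$ by $\hat z_j$ at the cost of an $O(1/\sqrt{N})$ term, and then invoke the law of large numbers for the i.i.d.\ family $\{\hat z_j\}$ to replace the empirical average $\frac1N\sum_j L(\hat z_i,u^o,\hat z_j)$ by $L[\hat z_i,u^o,\mu_t]=\int_{\mathbb R}L(\hat z_i,u^o,y)\mu_t(dy)$. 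Collecting the $\epsilon_N$ arising from the approximation of the initial empirical distribution $F_N$ by $F$ (cf.\ (\ref{eq:def-epsilon})) this yields $|J_i^N(u^o;u_{-i}^o)-J(u^o)|=O(\epsilon_N+1/\sqrt{N})$, where $J(u^o)$ is the optimal value of the infinite-population SOCP (\ref{eq:enash-socp}).

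Next comes the deviation lower bound, which is the crux. Fix an arbitrary $u_i\in\mathcal U_i^N$ and let $z_j^N$, $1\le j\le N$, denote the closed-loop states when agent $i$ uses $u_i$ and every $j\neq i$ retains $u^o$. The key observation is that agent $i$ enters the dynamics of every other agent only through the $1/N$-weighted coupling term, so a single deviation perturbs the mean field by $O(1/N)$. Making this precise, I would compare $z_j^N$ with the all-$u^o$ states and, using a Gronwall argument together with the robust (observation-path-continuous) representation of the filter established in (\ref{eq:prf-mv-sde-4})--(\ref{eq:prf-mv-sde-9}) to control the perturbation propagated through the filter-dependent controls $u^o(t,\tilde\varphi_j)$, obtain $\sup_{0\le t\le T}\mathbb{E}|z_j^N(t)-\hat z_j(t)|=O(1/\sqrt{N})$ for all $j\neq i$. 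Consequently the empirical measure generated by the non-deviating agents stays within $O(1/\sqrt{N})$ of $\mu_t$ in the metric $D_T$, so that agent $i$, whatever control it chooses, effectively faces the fixed flow $(\mu_t)_{0\le t\le T}\in\mathcal M_{[0,T]}$. I would then couple $z_i^N$ with the solution $\bar z_i$ of the McKean--Vlasov SOCP dynamics (\ref{eq:enash-socp}) driven by $u_i$ and the fixed $\mu_t$, obtain $\sup_t\mathbb{E}|z_i^N(t)-\bar z_i(t)|=O(\epsilon_N+1/\sqrt{N})$ by the same Gronwall/filter-robustness argument, and deduce $J_i^N(u_i;u_{-i}^o)\ge J(u_i)-O(\epsilon_N+1/\sqrt{N})$, where $J(u_i)$ is the infinite-population cost (\ref{eq:enash-socp}) under $u_i$.

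Finally I would combine the estimates: for any $u_i\in\mathcal U_i^N$,
\begin{eqnarray}
J_i^N(u_i;u_{-i}^o)&\ge& J(u^o)-O(\epsilon_N+1/\sqrt{N})\nonumber\\
&\ge& J_i^N(u^o;u_{-i}^o)-O(\epsilon_N+1/\sqrt{N}),\nonumber
\end{eqnarray}
which is precisely the left inequality of Definition \ref{def:enash} with $\epsilon=O(\epsilon_N+1/\sqrt{N})$. The principal difficulty, and the point where the present partially observed setting departs from the fully observed theory of \cite{hmc-mfg-main}, is the passage $J(u_i)\ge J(u^o)$: since the admissible deviations in $\mathcal U_i^N$ are adapted to the \emph{centralized} observation $\sigma$-field $\mathcal F_t^{(y^N)}$, one must argue that the additional information contained in the other agents' observation paths confers no asymptotic advantage, so that the centralized infinite-population value reduces to the decentralized value $J(u^o)$ for which $u^o$ is optimal. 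This holds because the limiting mean field $\mu_t$ is deterministic, whence the cross-observations $y_{-i}^N$ are asymptotically independent of agent $i$'s cost-relevant state; the quantitative control of this reduction is supplied by the Lipschitz-in-observation-path property of the nonlinear filter (\cite[Lemma 5.6]{bain-crisan}) already exploited in Theorem \ref{the:mv-sde-uniq}.
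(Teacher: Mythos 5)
Your proposal is correct and follows essentially the same route as the paper's own proof: a baseline McKean--Vlasov approximation when all agents use $u^o$ (Theorem \ref{the:mv-appr-partial}), a Gronwall-plus-filter-robustness estimate showing that a unilateral deviator effectively faces the fixed flow $(\mu_t)_{0\leq t \leq T}$, and finally the optimality of $u^o$ for the limiting SOCP (\ref{eq:enash-socp}) to close the chain of inequalities. If anything, you are more explicit than the paper on the one delicate step---the analogue of inequality (\ref{eq:prf-enash-main-6}), where the paper invokes optimality of $u^o$ against a deviation adapted to the centralized $\sigma$-field $\mathcal{F}_t^{(y^N)}$ with only the phrase ``by the construction of the generic agent's partially observed MFG system,'' whereas you spell out why the independent cross-observations confer no asymptotic advantage in the infinite-population limit.
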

\begin{proof}
Proof involves several linked perturbation estimates which involve conditional density process. We consider a strategy change for the first agent. Consider the following closed loop individual dynamics under the best response control policies $u_i^o=u^o$, $1\leq i \leq N$, at finite population
\begin{eqnarray}
dz_{i}^{o,N}(t)&=&\frac{1}{N}\sum_{j=1}^Nf\left(t,z_{i}^{o,N}(t),u^{o}\bigl(t,\tilde{\varphi}_i^{o,N}(t)\bigr),z_{j}^{o,N}(t)\right)dt+\sigma dw_{i}(t), \nonumber\\
dy_{i}^{o,N}(t)&=&h\left(z_{i}^{o,N}(t)\right)dt+ d\nu_{i}(t), \label{eq:prf-enash-staobs-dyn}
\end{eqnarray}
where $z_i^{o,N}(0)=z_i(0)$, $y_i^{o,N}(0)=0$ and $\tilde{\varphi}_i^{o,N}(t)$, $1\leq i \leq N$, denote the associated unnormalized filtering processes. Similarly, consider the MV system
\begin{eqnarray}
dz_{i}^o(t)&=&f\left[t,z_{i}^o(t),u^o(t,\tilde{\varphi}_i^o(t)),\mu_t\right]dt+\sigma dw_{i}(t), \label{eq:prf-enash-mv-sta-dyn}\\
dy_{i}^o(t)&=&h\left(z_{i}^o(t)\right)dt+ d\nu_{i}(t), \label{eq:prf-enash-mv-obs-dyn}
\end{eqnarray}
with the same initial conditions and $\tilde{\varphi}_i^o(t)$ denotes the associated unnormalized filtering processes. Following similar lines of the proof of Theorem \ref{the:mv-sde-uniq} and using
\begin{eqnarray}
\left\| \tilde{\varphi}_i^{o,N}(t) - \tilde{\varphi}_i^{o}(t)\right \|_{\mathsf{E}_k}\leq C_f^RC_4 \sup_{0\leq s\leq t}\left| z_{i}^o(t)-z_{i}^{o,N}(t)\right|\label{eq:prf-nash-prf-1}, 
\end{eqnarray}
we obtain, as a consequence of Gronwall's lemma, 
\begin{eqnarray}
\sup_{1\leq j \leq N}\sup_{0\leq t \leq T}\mathbb{E}| z_{j}^o(t)-z_{j}^{o,N}(t)|=O\left(1/\sqrt{N}\right)\label{eq:prf-nash-sup-finvsinf},
\end{eqnarray}
where the right hand side depends on the terminal time $T$. 

Assume now that while each agent $j$, $2\leq j \leq N$, are using the MFG best response control law $u^o(t,p)$, the first agent implement a strategy change from $u^o$ to $u_1 \in \mathcal{U}_1^N$ which yields
\begin{eqnarray}
dz_{1,c}^{o,N}(t)&=&\frac{1}{N}\sum_{j=1}^Nf\left(t,z_{1,c}^{o,N}(t),u_1\bigl(t,\tilde{\varphi}_{1:N,c}^{o,N}(t)\bigr),z_{j,c}^{o,N}(t)\right)dt+\sigma dw_{1}(t), \nonumber\\
dy_{1,c}^{o,N}(t)&=&h\left(z_{1,c}^{o,N}(t)\right)dt+ d\nu_{1}(t), \label{eq:prf-enash-staobs-dyn-per}\\
dz_{i,c}^{o,N}(t)&=&\frac{1}{N}\sum_{j=1}^Nf\left(t,z_{i,c}^{o,N}(t),u^{o}\bigl(t,\tilde{\varphi}_i^{o,N}(t)\bigr),z_{j,c}^{o,N}(t)\right)dt+\sigma dw_{i}(t), \nonumber\\
dy_{i,c}^{o,N}(t)&=&h\left(z_{i,c}^{o,N}(t)\right)dt+ d\nu_{i}(t), \label{eq:prf-enash-staobs-dyn-per-b}
\end{eqnarray}
where initial conditions are given by $z_{i,c}^{o,N}(0)=z_i(0)$, $y_{i,c}^{o,N}(0)=0$ and $\tilde{\varphi}_{i,c}^{o,N}(t)$, $1\leq i \leq N$, denotes the filtering processes and $\tilde{\varphi}_{1:N,c}^{o,N}(t):=\left(\tilde{\varphi}_{1,c}^{o,N}(t),\ldots, \tilde{\varphi}_{N,c}^{o,N}(t)\right)$. 

We also introduce the MV dynamics and its observation:
\begin{eqnarray}
d\hat{z}_{1}^o(t)&=&f\left[t,\hat{z}_{1}^o(t),u_1\big(t,\tilde{\hat{\varphi}}_{1:N}^o(t)\big),\mu_t\right]dt+\sigma dw_{1}(t) \label{eq:prf-enash-mv-sta-dyn-per}\\
d\hat{y}_{1}^o(t)&=&h\left(\hat{z}_{1}^o(t)\right)dt+ d\nu_{1}(t) \label{eq:prf-enash-mv-obs-dyn-per}\\
d\hat{z}_{i}^o(t)&=&f\left[t,\hat{z}_{i}^o(t),u^o(t,\tilde{\hat{\varphi}}_i^o(t)),\mu_t\right]dt+\sigma dw_{i}(t) \label{eq:prf-enash-mv-sta-dyn-per-a}\\
d\hat{y}_{i}^o(t)&=&h\left(\hat{z}_{i}^o(t)\right)dt+ d\nu_{i}(t) \label{eq:prf-enash-mv-obs-dyn-per-a}
\end{eqnarray}
for $2\leq i \leq N$ with the same initial conditions and $\tilde{\hat{\varphi}}_i^o(t)$ denotes the unnormalized filtering equations for the signal and observation pair $\big(\hat{z}_{i}^o(t), \hat{y}_{i}^o(t)\big)$, $1\leq i \leq N$.

It now follows that
\begin{eqnarray}
\sup_{2\leq j \leq N}\sup_{0\leq t \leq T}\mathbb{E}\left| z_{j,c}^{o,N}(t)-\hat{z}_{j}^o(t)\right|=O\left(1/\sqrt{N}\right)\label{eq:prf-nash-sup-b}.
\end{eqnarray}
Furthermore, by the robustness of the nonlinear filter, for $2\leq j \leq N$ and $0\leq t \leq T$,
\begin{eqnarray}
\left \| \tilde{\varphi}_j^{o,N}(t)-\tilde{\hat{\varphi}}_j^o(t)\right\|_{\mathsf{E}_k}\leq \sup_{0\leq s \leq t}C_f^RC_4\left| z_{j,c}^{o,N}(t)-\hat{z}_{j}^o(t)\right|\label{eq:prf-nash-sup-c}.
\end{eqnarray}
Gronwall's lemma, (\ref{eq:prf-nash-sup-b}) and (\ref{eq:prf-nash-sup-c}) imply that
\begin{eqnarray}
\sup_{0\leq t \leq T}\mathbb{E}\left|z_{1,c}^{o,N}(t)-\hat{z}_{1}^o(t)\right|=O\left(1/\sqrt{N}\right)\label{eq:prf-nash-sup-d}.
\end{eqnarray}
Observe that (\ref{eq:prf-nash-sup-d}) and so (\ref{eq:prf-nash-sup-d}) and the robustness of the filtering together imply that
\begin{eqnarray}
\sup_{0\leq t \leq T}\mathbb{E}\left\|\tilde{\hat{\varphi}}_{1}^{o}(t) -\tilde{\varphi}_{1,c}^{o,N}(t) \right\|_{\mathsf{E}_k}=O\left(1/\sqrt{N}\right)\label{eq:prf-nash-sup-e}.
\end{eqnarray}
Consequently, under the modified strategy, we obtain
\begin{eqnarray}
&&J_1^N\left(u_1;u_{-i}^o\right)\equiv \nonumber\\
&&\mathbb{E}\int_{0}^T \frac{1}{N}\sum_{j=1}^NL\left(z_{1,c}^{o,N}(t),u_{1}\left(t,\tilde{\varphi}_{1:N,c}^{o,N}(t)\right),z_{j,c}^{o,N}(t)\right)dt\label{eq:prf-enash-main-1}\\
&&\overset{(\ref{eq:prf-nash-sup-b}),(\ref{eq:prf-nash-sup-d})}{\geq}\mathbb{E}\int_{0}^T\frac{1}{N}\sum_{j=1}^NL\left(\hat{z}_{1}^{o}(t),u_{1}\left(t,\tilde{\varphi}_{1:N,c}^{o,N}(t)\right),\hat{z}_{j}^{o}(t)\right)dt\nonumber\\
&&\hspace{1in}-O\left(\epsilon_N+\frac{1}{\sqrt{N}}\right)\label{eq:prf-enash-main-2}\\
&& \overset{(\ref{eq:prf-nash-sup-b}),(\ref{eq:prf-nash-sup-c})}{\geq}\mathbb{E}\int_{0}^T \frac{1}{N}\sum_{j=1}^NL\bigg(\hat{z}_{1}^{o}(t),u_{1}\bigg(t,\tilde{\varphi}_{1,c}^{o,N}(t),\tilde{\hat{\varphi}}_{2:N}^o(t)\bigg), \hat{z}_{j}^{o}(t)\bigg)dt\nonumber\\
&&\hspace{1in}-O\left(\epsilon_N+\frac{1}{\sqrt{N}}\right)\label{eq:prf-enash-main-3}\\
&& \overset{(\ref{eq:prf-nash-sup-e})}{\geq}\mathbb{E}\int_{0}^T \hspace{-0.05in}\frac{1}{N}\sum_{j=1}^NL\bigg(\hat{z}_{1}^{o}(t),u_{1}\bigg(t,\tilde{\hat{\varphi}}_{1}^{o}(t),\tilde{\hat{\varphi}}_{2:N}^o(t)\bigg),\hat{z}_{j}^{o}(t)\bigg)dt\nonumber\\
&&\hspace{1in}-O\left(\epsilon_N+\frac{1}{\sqrt{N}}\right)\label{eq:prf-enash-main-4}\\
&& \overset{(\ref{eq:mv-conv-partial})}{\geq}\mathbb{E}\int_{0}^T L\big[\hat{z}_{1}^{o}(t),u_{1}\big(t,\tilde{\hat{\varphi}}_{1:N}^o(t)\big),\mu_t\big]dt-O\left(\epsilon_N+\frac{1}{\sqrt{N}}\right)\nonumber
\end{eqnarray}
Furthermore, by the construction of the generic agent's partially observed MFG system (\ref{eq:enash-socp}) we have
\begin{eqnarray}
\mathbb{E}\int_{0}^T L\left[\hat{z}_{1}^{o}(t),u_{1}\left(t,\tilde{\hat{\varphi}}_{1:N}^o(t)\right),\mu_t\right]dt \geq \mathbb{E}\int_{0}^T L\left[z_{1}^{o}(t),u_{1}^o\left(t,\tilde{\varphi}_{1}^o(t)\right),\mu_t\right]dt\label{eq:prf-enash-main-6}.
\end{eqnarray}
On the other hand we have
\begin{eqnarray}
&&\mathbb{E}\int_{0}^T L\left[z_{1}^{o}(t),u_{1}^o\left(t,\tilde{\varphi}_{1}^o(t)\right),\mu_t\right]dt\nonumber\\
&&\overset{(\ref{eq:mv-conv-partial})}{\geq}\mathbb{E}\int_{0}^T \frac{1}{N}\sum_{j=1}^NL\left[z_1^{o}(t),u_{1}^o\left(t,\tilde{\varphi}_{1}^o(t)\right),z_j^o(t)\right]dt-O\left(\epsilon_N+\frac{1}{\sqrt{N}}\right)\nonumber\\
&&\overset{(\ref{eq:prf-nash-sup-finvsinf})}{\geq}\mathbb{E}\int_{0}^T \frac{1}{N}\sum_{j=1}^NL\left[z_1^{o,N}(t),u_{1}^o\left(t,\tilde{\varphi}_{1}^{o,N}(t)\right),z_j^{o,N}(t)\right]dt-O\left(\epsilon_N+\frac{1}{\sqrt{N}}\right)\nonumber\\
&&\equiv J_1^N\left(u_1^o;u_{-1}^o\right)-O\left(\epsilon_N+\frac{1}{\sqrt{N}}\right)\label{eq:prf-enash-main-7}.
\end{eqnarray}
It now follows from (\ref{eq:prf-enash-main-1})-(\ref{eq:prf-enash-main-7})
\begin{eqnarray}
J_1^N\left(u_1^o;u_{-1}^o\right)-O\left(\epsilon_N+\frac{1}{\sqrt{N}}\right)\leq \inf_{u_1 \in \mathcal{U}_1^N}J_1^N\left(u_1;u_{-1}^o\right), \label{eq:prf-enash-main-8}
\end{eqnarray}
which completes the proof for agent $1$. The analysis for the other agents follows similarly. 
\end{proof}
\section{An Explicit Example with Finite Dimensional Nonlinear Filters}\label{sec:example}
In nonlinear filtering theory, initiated by the work \cite{benes}, there has been a considerable progress in representing a large class of nonlinear filters with finite dimensional quantities. Furthermore, in the case that these quantities are sufficient for the control, then the infinite dimensional conditional density can be replaced by this finite dimensional sufficient statistics. Consequently, a rigorous proof of the verification theorem can be obtained in this finite dimensional, completely observed stochastic optimal control problem. The literature for such a theory is vast and we refer reader to \cite[Section I]{bambos-1} for a succinct summary of the related works. In this section we consider such an explicit model whose nonlinear filters can be expressed with finite dimensional quantities and hence yield a more tractable PO MFG system. 

Consider the following MFG system where $f$, $h$ and $L$ are motivated from \cite[Section III.B-C]{bambos-1} and given in the following form: Let $z_i^N(t)=[z_{i,1}^N(t), z_{i,2}^N(t)]^T$, $G_t:=\begin{bmatrix} G_t^{11} \hspace{-0.03in}&0\\0\hspace{-0.03in}&G_t^{22}\end{bmatrix}$, $w_i(t):=(w_{i,1}(t),w_{i,2}(t))$ and
\begin{eqnarray}
dz_{i}^N(t)&=&\big[g\big(t,z_{i,1}^N(t)\big), u_i^N(t,y_i)\big]^Tdt+G_tdw_i(t)\nonumber\\
\hspace{-0.1in}dy_i(t)&=&H_tz_{i}^N(t)+N_t^{\frac{1}{2}}db_i(t), \nonumber\\
z_{i}^N(0)&=&z_{i}(0), y_i(0)=0,~1\leq i \leq N,\label{eq:sta-obs-dyn-ex}
\end{eqnarray}
where (i) $z_{i,j}^N(t):[0,T]\rightarrow \mathbb{R}$, $j\in\{1,2\}$, (ii) $(w_{i,1}(t),w_{i,2}(t)$, $b_i(t);0\leq t \leq T)$ are independent Brownian motions in $\mathbb{R}$ which are also independent of $z_i(0)$, (iii) $g$, $H_t$, $G_t$, $N_t$ and $h$ satisfy \cite[Assumptions A2)-A6) and A9)]{bambos-1}. We note that the example is a suitably specialized case of \cite[Section III.B-C]{bambos-1} where (A2)-(A4) and (A5) are satisfied by \cite[Assumptions A2), A4), A9)]{bambos-1}. The cost function is given by
\begin{eqnarray}
J_{i}^N(u_i^N;u_{-i}^N):=
\mathbb{E} \int_{0}^T \frac{1}{N}\sum_{j=1}^N l_2\left(t,z_i^N(t),u_i^N(t,y_i), z_{j}^N(t)\right)dt\label{eq:cost-ex}
\end{eqnarray}
where for any $z \in \mathbb{R}^2$, $l_2(\cdot,z)$ satisfy \cite[Assumption A7]{bambos-1}. Let $\hat{\mathcal{U}}_i:=\{u_i(\cdot) \in L_{\mathcal{F}_t^{y_i}}^2\left([0,T];U\right)\}$.

For the dynamics (\ref{eq:sta-obs-dyn-ex}), we also consider the limiting system
\begin{eqnarray}
dz_{i}(t)&=&\big[g\big(t,z_{i,1}(t)\big), u_i(t,y_i)\big]^Tdt+G_tdw_i(t)\nonumber\\
dy_i(t)&=&H_tz_{i}(t)+N_t^{\frac{1}{2}}db_i(t), \nonumber\\
J_{i}&:=&\mathbb{E} \int_{0}^T l_2\left[t,z_i(t),u_i(t,y_i), \mu_t\right]dt\nonumber\\
z_{i}(0)&=&z_{i}(0), y_i(0)=0,~1\leq i \leq N,\label{eq:sta-obs-dyn-mv-ex}
\end{eqnarray}
where we emphasize that the mean field exists only in the performance functions. Assume now that there exists a function $\phi(t,x) \in C_{t,x}^{1,2}\left([0,T]\times\mathbb{R}\right)$ which satisfies
\begin{eqnarray}
\partial_t \phi(t,x)+\frac{1}{2}(G_t^{11})^2\partial^2_{xx}\phi(t,x)+\frac{1}{2}|G_t^{11}\partial_{x}\phi(t,x)|^2=\frac{1}{2}\left(Q_tx^2+2m_tx+\delta_t\right)\label{eq:phi-ex}
\end{eqnarray}
where $Q$, $m$ and $\delta$ are arbitrary functions. Then, for each $u \in \hat{\mathcal{U}}$ with the following conditions are satisfied:
\begin{itemize}
\item [(E1)] The random variable $z_i(0)$, $1\leq i \leq N$, has density
\begin{eqnarray}
q_i(z_i)=\frac{\exp\big(-P_0^{-1}\left(z_i-\xi\right)^2\big)}{(2\pi)^{\frac{n}{2}}|P_0|^{\frac{1}{2}}},~P_0\geq 0\label{eq:inident-exm}
\end{eqnarray}
for a given $\xi>0$. 
\item [(E2)] The nonlinear drift function satisfies
\begin{eqnarray}
g(t,x)=(G_t^{11})^2\partial_x\phi(t,x),\label{eq:as2-exm}
\end{eqnarray}
\end{itemize}
the system defined by (\ref{eq:sta-obs-dyn-mv-ex}) has an information state given by
\begin{eqnarray}
q_t^i=\exp\left(\phi(x,t)+\lambda_t\right) \frac{\exp\big(-\frac{1}{2}\big| P_t^{-\frac{1}{2}}\left(z_i-r_t\right)^2\big|\big)}{(2\pi)^{\frac{n}{2}}|P_t|^{\frac{1}{2}}}\label{eq:fin-inf-state}
\end{eqnarray}
where $P_t\geq 0, \forall t \in [0,T]$ and $\lambda_t \in \mathbb{R}$ and $r_t \in \mathbb{R}^2$ are given by:
\begin{eqnarray}
dr_t&=&\left(P_tQ_t\right)r_tdt-P_tm_tdt+u_i(t)dt, ~r(0)=\xi\label{eq:fin-dim-filt}\\
\frac{dP_t}{dt}&=&-P_tQ_tP_t+G_tG_t^T, ~P(0)=P_0\label{eq:ricatti}\\
\lambda_t &=&\frac{1}{2}\int_{0}^t\left(Q_sr_s^2+2m_sr_s+\delta_s+\mbox{tr}(P_sQ_s\right)ds\nonumber.
\end{eqnarray}
For the partially observed MFG problem defined by (\ref{eq:sta-obs-dyn-ex})-(\ref{eq:cost-ex}), the information state $q_t$ in (\ref{eq:fin-inf-state}) for the generic agent will be given in a finite-dimensional form if the PDE in (\ref{eq:phi-ex}) has shown to an explicit solution. This is discussed in \cite[Theorem 3.9]{bambos-1} and it is shown that solution is given by
\begin{eqnarray}
\phi(t,x)&=&\log\Gamma(t,x) \nonumber\\
\Gamma(t,x)&=&\frac{1}{2}\Delta_t x^2+ x\varsigma_t+\eta_t\label{eq:gamma-ex}
\end{eqnarray}
where $\Delta(\cdot), \varsigma(\cdot), \eta(\cdot)$ are given in the statement of the theorem which sets the function $g(t,x)$ to be:
\begin{eqnarray}
g(t,x)&=&\frac{G_t^{11}G_t^{11}}{\Gamma(t,x)}\big(\Delta_tx +\varsigma_t\big)\label{eq:gh-fin}.
\end{eqnarray}
Consequently, by (\ref{eq:sta-obs-dyn-ex})-(\ref{eq:gh-fin}), we obtain a PO MFG model whose filtering equation has a finite dimensional solution. 
\begin{remark}
Consider the following filtering problem
\begin{eqnarray}
dz(t)&=&f\big(z(t)\big)dt+dw(t)\label{eq:rem-fin-1}\\
dy(t)&=&\int_{0}^t z(s)ds+d\nu(t)\label{eq:rem-fin-2}.
\end{eqnarray}
It is shown in \cite{benes} that the solution to the unnormalized conditional density of $\rho(t,x)$ of $P(z(t)|\mathcal{F}_t^y)$ is given in terms of 10 sufficient statistics when $f$ satisfies the condition that $f'+f^2=$ $ax^2+bx+c$, $a\geq -1$. The model defined in (\ref{eq:sta-obs-dyn-ex}) is a generalization of this result to the case where the control enters into the dynamics which is in general the case in MFG. 
\end{remark}
By the Separation Theorem discussed in Section \ref{sec:socp-gen}, the optimal control process for a generic agent is given by the minimizer of a Hamiltonian which was defined earlier. Before, we define the forward and backward operators as:
\begin{eqnarray}
\mathcal{O}_t^a&:=&\frac{1}{2}\nabla^2_{xx}+\left(g, a\right)^T\nabla_x \nonumber\\
\mathcal{O}_t^{\ast a}&:=& \frac{1}{2}\nabla^2_{xx}-\left(g,a\right)^T\nabla_x-\mathbf{tr}\left[\nabla_x \left(g,a\right)\right]\label{eq:fwd-bck-op-ex}.
\end{eqnarray}
Consequently, one can write the the optimal control in the form:
\begin{eqnarray}
u^\ast &=&\{u^\ast(t,p)=a^\ast\left(T-t, q_t\right);0 \leq t \leq T\}\nonumber\\
a^\ast\left(\tau, p\right)&=&\arg\min_{a \in U} \left\{\left(\mathcal{O}_{T-t}^a\mathsf{D}V(\tau,p)(\cdot),p\right)+\left(l_2[t,\cdot, a, \mu_t],p\right)\right\}\label{eq:cont-sep-ex}.
\end{eqnarray}
Notice that the control law given in the separated form $a^\ast\left(T-t, q_t\right)$ depends on $q_t$ which has an explicit solution given by (\ref{eq:fin-inf-state})-(\ref{eq:gamma-ex}) and entails a finite dimensional representation. In other words, the best response process of the agents in the above partially observed MFG example, which is $\mathcal{F}_t^{y}$-adapted, can be written as a function of $\big(G_t, g, y_t, H_t,\phi(t),P_0,\xi,Q_t, C_t^{-1}, m_t\big)$. 

Finally, we remark that one can follow the rest of the analysis in Section \ref{sec:fixedpoint} in order to obtain a sufficient condition so that PO MFG system admits a unique solution. Alternatively, one can employ the models considered in \cite[Section IV]{bambos-1} and obtain an equivalent LQG MFG system for such nonlinear models. Such a path is currently under investigation and we will report further details in a future work. 

\section{Comparison with MFG with a Partially Observed Major Agent}\label{sec:comp-maj}
In the nonlinear MFG setup where there is a major agent, the best response control policy of each minor agent depends on the state of the major agent in addition to the mean field which is stochastic; the mean field is adapted to the filtration generated by the Brownian motion of the major agent \cite{NP-Siam2013} (see also \cite{minyi-major} for the LQG case). More explicitly, consider the following stochastic coefficient MV (SMV) type dynamics:
\begin{eqnarray}
dz_{0}(t)&=&f_{0}[t,z_{0},u_{0}(t,\omega,z_{0}),\mu_t(\omega)]dt+\sigma_{0}[t,z_{0},\mu_t(\omega)]dw_{0}(t,\omega),\label{eq:stmfg-maj-sta}\\
dz(t)&=&f[t,z,u(t,\omega,z),\mu_t(\omega)]dt+\sigma[t,z(t),\mu_t(\omega)]dw(t),\label{eq:stmfg-min-sta}
\end{eqnarray} with $z_{0}(0)=z_{0}(0)$ and $z(0)=z(0)$ where $\big(\mu_t(\omega)\big)_{0\leq t \leq T}$ satisfies $P\left(z(t)\leq \alpha|\mathcal{F}_t^{w_{0}}\right)=\int_{-\infty}^\alpha\mu_{t}(\omega,dx)$ for all $\alpha \in \mathbb{R}^n$. The SMV SDEs in (\ref{eq:stmfg-maj-sta}) and (\ref{eq:stmfg-min-sta}) represent the closed loop state dynamics (with $\mathcal{F}_t^{w_0}:=\sigma\{w_0(s):0\leq s \leq t\}$-adapted feedback control) of the major and the generic minor agents, respectively, in the infinite population limit. Let $\mathcal{U}_0:=\{u(\cdot)\in U_0:u ~\mbox{is adapted to }\mathcal{F}_t^{w_0}~\mbox{and}~
\mathbb{E}\int_{0}^T|u(t)|^2dt<\infty\}$ and $\mathcal{U}_i:=\{u(\cdot)\in U : u ~\mbox{is adapted to }$ $\mathcal{F}_t^{w_0,w_i}~\mbox{and}~
\mathbb{E}\int_{0}^T|u(t)|^2dt<\infty\}$. Then we define two SOCPs as follows:
\begin{itemize}
\item [] \hspace{-0.3in}\textit{Major Agent's SOCP at Infinite Population:}
\begin{eqnarray}
&&d z_0(t) = f_0[t, z_0(t),u_0(t),\mu_t(\omega)]dt +\sigma_0[t, z_0(t),\mu_t(\omega)] dw_0(t), \nonumber\\
&&J_{0}(u_0) = \mathbb{E} \int_0^T L[z_0(t), u_0(t),\mu_t(\omega)]dt.\nonumber
\end{eqnarray}
\item [] \hspace{-0.3in}\textit{Generic Minor Agent's SOCP at Infinite Population:}
\begin{eqnarray}
&&d z_i(t) = f[t, z_i(t),u(t),\mu_t(\omega)]dt +\sigma[t, z_i(t),\mu_t(\omega)] dw_i(t) \nonumber\\
&&J_{i}(u) = \mathbb{E} \int_0^T \hspace{-0.05in}L[z_i(t),z_0(t),u(t),\mu_t(\omega)]dt.\nonumber
\end{eqnarray}
\end{itemize}
where $J_0$ is to be minimized over $\mathcal{U}_0$, $J_i$ is to be minimized over $\mathcal{U}_i$, $\left(w_i(t);1\leq i \leq N\right)$ are $N$ independent Brownian motions and $u_0(t)$ and $u(t)$ are $\mathcal{F}_t^{w_0}$-adapted best response control processes (i.e., the unique solutions satisfying \cite[(5.14)-(5.19)]{NP-Siam2013}. The main contribution of the nonlinear MM-MFG theory can then be summarized by that the set of control laws given by $u_{0}^N=u_0$, $u_{i}^N=u, i=1,\ldots, N$ when applied to a finite population game generates an $\epsilon$-Nash equilibrium \cite[Theorem 7.2]{NP-Siam2013}. One essential point in this result is that the state of the major agent and the stochastic measure induced by the generic minor agent, $\left(z_{0}(t,\omega),\mu_t(\omega)\right)_{0\leq t \leq T}$, are completely observed by the minor agents. It is worth remarking that the solution to these two SOCPs are given by certain backward SPDEs (BSPDE) (see \cite[Theorem 4]{sen-caines-po-sicon}). This nonstandard feature is due the fact that the dynamics and the cost functions have explicit dependence on the underlying probability space, i.e., each $f_0$, $f$, $L_0$ and $L$ is $\mathcal{F}_t^{w_0}$-adapted. Such problems are referred to as SOCP with random parameters which is introduced in \cite{peng}. 

Motivated by the observation that the best response control policies depend upon the state of the major agent, the corresponding PO MM-MFG is examined in \cite{sen-caines-po-sicon} where it is assumed that minor agents have partial observations on the major agent's state in a distributed manner where each agent has complete observation on its own state. 
\section{Conclusions}\label{sec:conc}
In this work we consider an MFG model where the agents have nonlinear dynamics and cost functions and have noisy measurements on their individual state dynamics. By constructing the associated completely observed model via an application of nonlinear filtering theory and the Separation Principle, a control problem with infinite dimensional state space is formulated and a solution is characterized via a solution to the HJB equation in this function space. The optimal control law obtained from the solution of the HJB equation is next applied by the agents in the infinite population limit. We show that the aggregate behaviour of the population under such policies collectively generate the mean field and then establish the $\epsilon$-Nash property of such solutions.
\section{Acknowledgement}
The authors wish to thank the referees for their valuable comments which greatly improved the quality of the manuscript.
\bibliographystyle{siamplain}
\bibliography{sencainespomfgsicon}




\end{document}